\setlist{nolistsep}
\DeclareMathOperator{\Area}{Area}
\DeclareMathOperator{\Per}{Per}
\newcommand{\grad}{\nabla}
\newcommand{\M}{\mathcal{M}}
\newcommand{\IP}{\mathbb{P}}
\newcommand{\IE}{\mathbb{E}}
\newcommand{\IR}{\mathbb{R}}
\newcommand{\IZ}{\mathbb{Z}}
\newcommand{\IN}{\mathbb{N}}
\newcommand{\marknorm}{\ell}
\newcommand{\norm}[1]{\lVert #1\rVert}
\newcommand{\abs}[1]{\lvert #1\rvert}
\newcommand{\tg}{\textswab}
\newcommand{\msb}{\mathsfbf}
\newcommand{\temp}{\textit{temp}}
\newcommand*{\1}{\textrm{{\usefont{U}{fplmbb}{m}{n}1}}}
\newcommand{\defeq}{%
	\mathrel{ \vcenter{\baselineskip0.5ex \lineskiplimit0pt \hbox{\scriptsize.} \hbox{\scriptsize.}} } =}
\renewcommand{\eqdef}{%
	\mathrel{= \vcenter{\baselineskip0.5ex \lineskiplimit0pt \hbox{\scriptsize.} \hbox{\scriptsize.}} } }
\DeclareSymbolFont{CMletters}{OML}{cmm}{m}{it}
\DeclareMathSymbol{\xi}{\mathord}{CMletters}{"18}
\renewcommand{\Xi}{\varXi}
\renewcommand{\epsilon}{\varepsilon}
\renewcommand{\theta}{\vartheta}
\renewcommand{\Theta}{\varTheta}
\renewcommand{\Lambda}{\varLambda}
\renewcommand{\Delta}{\varDelta}
\theoremstyle{plain}
\theoremstyle{definition}
\newtheorem{Definition}{Definition}[section]
\newtheorem{Example}{Example}
\newtheorem{Lemma}[Definition]{Lemma}
\newtheorem{Proposition}[Definition]{Proposition}
\theoremstyle{plain}
\newtheorem{Theorem}{Theorem}%[Definition]
\theoremstyle{remark}
\newtheorem*{Remark}{Remark}
\newtheorem*{Remarks}{Remarks}
\newcommand*\myrule[1][.25\textwidth]{%
	\tikz {\path [fill, draw] (0,0) [out=0, in=180] to +(.5*#1,1pt) [out=0, in=180] to +(.5*#1,-1pt) [out=180, in=0] to +(-.5*#1,-1pt) [out=180, in=0] to cycle;}}
\def\@maketitle{%
  \newpage
  \null
  \vskip 2em%
  \begin{center}%
  \let \footnote \thanks
    {\Large\textbf{\textit{ \@title }}\par}%
    \vskip 1.5em%
    {\large
      \lineskip .5em%
      \vskip -.8em
      \begin{tabular}[t]{c}%
        \@author%\@author
      \end{tabular}\par}%
    \vskip .5em%
    %{\small \@date}%
    \par
  \vskip .7em
  \myrule
  \vskip 1.5em
  \end{center}%
  }
\renewenvironment{abstract}{%
\vskip -1.5em
\hfill\begin{center}\small
\begin{minipage}{0.9\textwidth}}
{\par\noindent
\end{minipage}
\end{center}
\vskip 2em}
\newenvironment{acknowledgments}{
\vskip 2em
\noindent\small\textbf{Acknowledgments:}}
{\par\noindent
\vskip 1.5em}
\title{Marked Gibbs point processes with unbounded interaction: an existence result}
\author{Sylvie R\oe lly, Alexander Zass}
\begin{document}

	\maketitle
	\begin{abstract}
We construct marked Gibbs point processes in $\IR^d$ under quite general assumptions. Firstly, we allow for interaction functionals that may be unbounded and whose range is not assumed to be uniformly bounded. Indeed, our typical interaction admits an a.s. \emph{finite but random} range. Secondly, the random marks -- attached to the locations in $\IR^d$ -- belong to a general normed space $\mathcal{S}$. They are not bounded, but their law should admit a super-exponential moment. The approach used here relies on the so-called \emph{entropy method} and large-deviation tools in order to prove tightness of a family of finite-volume Gibbs point processes. An application to infinite-dimensional interacting diffusions is also presented.
\bigbreak
\emph{Keywords: marked Gibbs process, infinite-dimensional interacting diffusion, specific entropy, DLR equation}
\smallbreak
\emph{AMS MSC 2010: 60K35, 60H10, 60G55, 60G60, 82B21, 82C22}
\end{abstract}

\section{Introduction}

In this paper we construct a certain class of continuous marked Gibbs point processes. Recall that a marked point consists of a pair: a location $x\in\IR^d$, $d\geq 1$, and a mark $m$ belonging to a general normed space $\mathcal{S}$. The interactions we consider here are described by an energy functional $H$, which acts both on locations and on marks. This includes, in particular, the case of $k$-body potentials, but is indeed a more general framework, useful to treat examples coming from the field of stochastic geometry (as e.g. the area- or the Quermass-interaction model, see Example \ref{ex:quermass}).

The novelty of the results presented in this paper is threefold. Firstly, we do not assume a specific form of the interaction -- like pairwise or $k$-body -- but only make assumptions (in Section \ref{section:energy}) on the resulting energy functional $H$ itself. In particular, we do not assume superstability of the interaction, but only rely on the stability assumptions $(\mathcal{H}_{st})$ and $(\mathcal{H}_{loc.st})$. In the field of stochastic geometry, in particular, many quite natural energy functionals are stable but not superstable, like the Quermass-interaction model presented in Example \ref{ex:quermass}.

Secondly, the Gibbsian energy functional we consider has an \emph{unbounded range}: it is finite, but random and not uniformly bounded -- as opposed to models treated for example in \cite{conache_daletskii_kondratiev_pasurek_2018} which deal with a bounded-range interaction; see Assumption $(\mathcal{H}_{r})$. For a very recent existence proof in the case of infinite-range interaction (without marks) see \cite{dereudre_vasseur_2019}. Moreover, unlike the hyper-edge interactions presented in \cite{dereudre_drouilhet_georgii_2011}, we treat the case of interactions which are highly non local: the range of the conditional energy on a bounded region of an infinite configuration (see Definition \ref{def:locEnergy}) requires knowledge of the whole configuration and cannot be determined only by a local restriction of the configuration.

Lastly, we work with a mark reference distribution whose support is a priori \emph{unbounded} but only fulfils a super-exponential integrability condition (see Assumption $(\mathcal{H}_m)$).\\

Let us mention recent works on the existence of marked Gibbs point processes for \emph{particular} models. In \cite{dereudre_2009} D. Dereudre proves the existence of the \emph{Quermass-interaction process} as a planar germ-grain model; we draw inspiration from his approach, presenting here an existence result for more general processes, under weaker assumptions. In \cite{daletskii_kondratiev_kozitsky_pasurek_2014} and \cite{conache_daletskii_kondratiev_pasurek_2018} the authors treat the case of unbounded marks in $\IR^d$ with \emph{finite-range} energy functional which is induced by a \emph{pairwise interaction}.

\bigbreak

The main thread of our approach is the reduction of the general marked point process to a germ-grain model, where two marked points $(x_1,m_1),(x_2,m_2) \in \IR^d\times\mathcal{S}$ do not interact as soon as the balls with centre $x_i$ and radius $\norm{m_i},\ i=1,2,$ do not intersect. The framework we work in requires the introduction of a notion of \emph{tempered configurations} (see Section \ref{section:tempered}) in order to better control the support of the Gibbs measure we construct. In this way, the size growth of the marks of far away points is bounded. In Section \ref{section:support} we see that this procedure is justified by the fact that the constructed infinite-volume Gibbs measure is actually concentrated on tempered configurations.

The originality of our method to construct an \emph{infinite-volume measure} consists in the use of the specific entropy as a tightness tool. This relies on the fact that the level sets of the specific-entropy functional are relatively compact in the local convergence topology; see Section \ref{section:entropy}. This powerful topological property was first shown in the setting of marked point processes by H.-O. Georgii and H. Zessin in \cite{georgii_zessin_1993}. Indeed, we prove in Proposition \ref{prop:entropy}, using large-deviation tools, that the entropy of some sequence of finite-volume Gibbs measures is uniformly bounded. This sequence is therefore tight, and admits an accumulation point.
Let us remark that the entropy tool relies mainly on stability assumptions of the energy $H$, without the need for superstability. The usual approach (see e.g. \cite{ruelle_1970}), in fact, uses the superstability condition to precisely control the local density of points; in our framework, we do this thanks to an equi-integrability property, which holds on the entropy level sets (see Lemma \ref{lemma:EI}). Furthermore, the stability notion we use here is weaker than the classic one of Ruelle, as it includes a term depending on the marks of the configuration. For more details and examples, see Section \ref{section:energy}.
 
The last step of the proof consists in showing that this accumulation point satisfies the \emph{Gibbsian} property. Since the interaction is not local and not bounded, this property is not inherited automatically from the finite-volume approximations, but instead requires an accurate analysis, which is done in Section \ref{section:gibbs}. In Section \ref{section:diffusion} we propose an application to infinite-dimensional interacting diffusions.

\subsection{Point-measure formalism}

The point configurations considered here live in the product state space \mbox{$\mathcal{E}\defeq\mathbb{R}^d\times\mathcal{S}$}, $d\geq 1$, where $\big(\mathcal{S},\lVert\cdot\rVert\big)$ is a general normed space: each point \emph{location} in $\mathbb{R}^d$ has an associated \emph{mark} belonging to $\mathcal{S}$. The location space $\mathbb{R}^d$ is endowed with the Euclidean norm $\lvert\cdot\rvert$, and the associated Borel $\sigma$-algebra $\mathcal{B}(\IR^d$); we denote by $\mathcal{B}_b(\IR^d)\subset\mathcal{B}(\IR^d)$ the set of bounded Borel subsets of $\IR^d$. A set $\Lambda$ belonging to $\mathcal{B}_b(\IR^d)$ will often be called a \emph{finite volume}. We denote by $\mathcal{B}(\mathcal{S})$ the Borel $\sigma$-algebra on $\mathcal{S}$.

The set of point measures on $\mathcal{E}$ is denoted by $\M$; it consists of the integer-valued, $\sigma$-finite measures $\gamma$ on $\mathcal{E}$:
\begin{equation*}
  \M \defeq \big\{\gamma = \sum_i \delta_{\msb{x}_i}: \msb{x}_i=(x_i,m_i)\in\IR^d\times\mathcal{S}\big\}.
\end{equation*}
We endow $\mathcal{M}$ with the canonical $\sigma$-algebra generated by the family of local counting functions on $\mathcal{M}$,
\begin{equation*}
	\gamma=\sum_i\delta_{(x_i,m_i)}\mapsto\text{Card}(\{i: x_i\in\Lambda,\ m_i\in A\}),\ \Lambda\in\mathcal{B}_b(\IR^d),\ A\in\mathcal{B}(\mathcal{S}).
\end{equation*}

We denote by $\underline{o}$ the zero point measure whose support is the empty set. Since, in the framework developed in this paper, we only consider \emph{simple} point measures, we identify them with the subset of their atoms:
\begin{equation*}
  \gamma \equiv  \big\{\msb{x}_1,\dots,\msb{x}_n,\dots\big\} \subset\mathcal{E}.
\end{equation*}
For a point configuration  $\gamma \in \M$ and a fixed set $\Lambda\subset\IR^d$, we denote by $\gamma_\Lambda$ the restriction of the point measure $\gamma$ to the set $\Lambda\times\mathcal{S}$:
\begin{equation*}
  \gamma_\Lambda \defeq \gamma\cap(\Lambda\times\mathcal{S}) = \sum_{\{i:\, x_i\in\Lambda\}} \delta_{(x_i,m_i)}.
\end{equation*}
A \emph{functional} is a measurable $\IR\cup\{+\infty\}$-valued map defined on $\M$. We introduce specific notations for some of them: the mass of a point measure $\gamma$ is
denoted by $\lvert\gamma\rvert$. It corresponds to the number of its atoms if $\gamma$ is simple.\\
We also denote by $\tg{m}$ the supremum of the size of the marks of a configuration:
\begin{equation*}
  \tg{m}(\gamma) \defeq \sup\limits_{(x,m)\in\gamma} \norm{m}, \quad \gamma\in\M.
  \end{equation*}
  The integral of a fixed function $f\colon\mathcal{E}\rightarrow\IR$ under the measure $\gamma \in\M $ -- when it exists -- is denoted by
\begin{equation*}
  \langle\gamma,f\rangle \defeq \int f d\gamma = \sum_{\msb{x}\in\gamma}f(\msb{x}).
\end{equation*}
  For a finite volume $\Delta$, we call \emph{local} or more precisely $\Delta$-\emph{local},  any functional $F$ satisfying
  \begin{equation*}
    F(\gamma) = F(\gamma_\Delta), \quad \gamma\in\M.
  \end{equation*}
  We also define the set of finite point measures on $\mathcal{E}$:
\begin{equation*}
  \M_f \defeq \big\{ \gamma \in \M : \lvert\gamma\rvert < +\infty \big\}.
\end{equation*}
Moreover, for any bounded subset $\Lambda\subset\IR^d$, $\M_\Lambda$ is the subset of $\M_f$ consisting of the point measures whose support is included in $\Lambda\times\mathcal{S}$:
\begin{equation*}
  \M_\Lambda \defeq \big\{\gamma \in \M : \gamma = \gamma_\Lambda \big\}\subset\M_f.
\end{equation*}
Let $\mathcal{P}(\M)$ denote the set of probability measures on $\M$.\\
We write $\IN^*$ for the set of non-zero natural numbers $\IN\setminus\{0\}$. The open ball in $\IR^d$ centred in $y\in\IR^d$ with radius $r\in\IR_+$ is denoted by $B(y,r)$.

\section{Gibbsian setting}

\subsection{Mark reference distribution}\label{section:marks}

The mark associated to any point of a configuration is random. We assume that the reference mark distribution $R$ on $\mathcal{S}$ is such that its image under the map $m\mapsto\norm{m}$ is a probability measure $\rho$ on $\IR_+$ that admits a super-exponential moment, in the following sense:
\begin{description}
  \item[$(\mathcal{H}_m)$] There exits $\delta>0$ such that
  \begin{equation}\label{H:mark}
    \int_{\IR_+} e^{\marknorm^{d+2\delta}}\rho(d\marknorm)<+\infty.
  \end{equation}
\end{description}
Throughout Sections 2 and 3 of the paper, the parameter $\delta$ is fixed.
\begin{Remark}
  The probability measure $\rho$ is the density of a positive random variable $X$ such that $X^{\frac{2}{d}+\epsilon}$ is subgaussian for some $\epsilon>0$ (see e.g. \cite{kahane_1960}, \cite{ledoux_talagrand_2012}).
\end{Remark}

\subsection{Tempered configurations}\label{section:tempered}

We introduce the concept of \emph{tempered configuration}. For such a configuration  $\gamma$, the number of its points in any finite volume $\Lambda$,  $\lvert\gamma_\Lambda\rvert$, should grow sublinearly w.r.t. the volume, while its marks should grow as a fraction of it. More precisely, we define the space $\M^{\temp}$  of tempered configurations as the following increasing union
\begin{equation*}
  \M^{\temp} \defeq \bigcup\limits_{\tg{t}\in\IN}\M^{\tg{t}},
\end{equation*}
where
\begin{equation}\label{eq:temp}
  \M^{\tg{t}} = \big\{\gamma\in\M:\ \forall l\in\IN^*, \ \langle\gamma_{B(0,l)},f\rangle\leq\tg{t} \,l^d	\ \text{ for }f(x,m) \defeq 1+\norm{m}^{d+\delta}\big\}.
\end{equation}
We now prove some properties satisfied by tempered configurations.
\begin{Lemma}\label{lemma:temp}
  The mark associated to a point in a tempered configuration is asymptotically negligible with respect to the norm of the said point: any tempered configuration $\gamma\in\M^{\temp}$ satisfies
  \begin{equation*}
    \lim\limits_{l\rightarrow+\infty} \dfrac{1}{l} \tg{m}(\gamma_{B(0,l)}) = 0.
  \end{equation*}
\end{Lemma}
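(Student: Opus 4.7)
The strategy is to extract, from the temperedness bound applied to a single point, a polynomial (sublinear) growth rate for the largest mark, and then to compare this rate with $l$.

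Fix $\gamma\in\M^{\temp}$, so that there exists $\tg{t}\in\IN$ with $\langle\gamma_{B(0,l)},f\rangle\leq\tg{t}\,l^d$ for all $l\in\IN^*$, where $f(x,m)=1+\norm{m}^{d+\delta}$. The first step is the trivial lower bound: since $f$ is nonnegative and every term $(x,m)\in\gamma_{B(0,l)}$ contributes at least $\norm{m}^{d+\delta}$ to the sum, we get for every such atom $\norm{m}^{d+\delta}\leq\tg{t}\,l^d$, hence
\begin{equation*}
  \tg{m}(\gamma_{B(0,l)})=\sup_{(x,m)\in\gamma_{B(0,l)}}\norm{m}\;\leq\;\tg{t}^{\,1/(d+\delta)}\,l^{d/(d+\delta)},\qquad l\in\IN^*.
\end{equation*}

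Dividing by $l$ yields $l^{-1}\tg{m}(\gamma_{B(0,l)})\leq\tg{t}^{\,1/(d+\delta)}\,l^{-\delta/(d+\delta)}$, which tends to $0$ as $l\to+\infty$ along integers, because the exponent $-\delta/(d+\delta)$ is strictly negative thanks to the assumption $\delta>0$ in $(\mathcal{H}_m)$.

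To extend the conclusion from $l\in\IN^*$ to $l\in\IR_+$, I would use monotonicity: for any real $l\geq 1$ one has $B(0,l)\subseteq B(0,\lceil l\rceil)$, so $\tg{m}(\gamma_{B(0,l)})\leq\tg{m}(\gamma_{B(0,\lceil l\rceil)})$, and combining with the previous inequality and with $\lceil l\rceil/l\to 1$ gives the claimed limit. There is no real obstacle here: the whole argument is essentially a one-line consequence of the definition, the only (minor) point of care being the passage from integer to real $l$, which is handled by ceiling.
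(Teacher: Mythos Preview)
Your proof is correct and follows essentially the same route as the paper: both extract from the temperedness inequality the bound $\tg{m}(\gamma_{B(0,l)})\leq(\tg{t}\,l^d)^{1/(d+\delta)}$ and then observe that the exponent $d/(d+\delta)<1$ makes the ratio vanish. The only difference is cosmetic: the paper records the explicit threshold $l_1(\tg{t},\eta)=(\tg{t}/\eta^{d+\delta})^{1/\delta}$ beyond which $\tg{m}(\gamma_{B(0,l)})/l\leq\eta$, because this quantity is reused in the next lemma; your handling of the integer-to-real passage via $\lceil l\rceil$ is in fact more careful than the paper's, which silently treats the temperedness bound as valid for all real $l\geq 1$.
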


\begin{proof}
  Let $\gamma\in\M^{\tg{t}}$, $\tg{t}\geq 1$. From \eqref{eq:temp}, recalling that $\tg{m}(\gamma)=\sup\limits_{(x,m)\in\gamma}\norm{m}$, we get that, for all $l\geq1$,
  \begin{equation*}
    \tg{m}(\gamma_{B(0,l)})\leq \big(\tg{t} l^d\big)^{\sfrac{1}{d+\delta}} = \dfrac{(\tg{t} l^d)^{\sfrac{1}{d+\delta}}}{l} l.
  \end{equation*}
  Define, for any $\eta\in(0,1)$,
  \begin{equation}\label{eq:l1}
    l_1(\tg{t},\eta) \defeq \Big(\dfrac{\tg{t}}{\eta^{d+\delta}}\Big)^{\sfrac{1}{\delta}}.
  \end{equation}
  Then, if $l \geq l_1(\tg{t},\eta)$,
  \begin{equation}\label{eq:boundm}
    \dfrac{\tg{m}(\gamma_{B(0,l)})}{l} \leq \dfrac{(\tg{t} l^d)^{\sfrac{1}{d+\delta}}}{l}\leq\eta\in(0,1),
  \end{equation}
  and the Lemma is proved.
\end{proof}

\begin{Lemma}\label{lemma:range}
  Let $\gamma\in\M^{\tg{t}}, \tg{t}\geq 1$, and define $\tg{l}(\tg{t}) \defeq \tfrac{1}{2}\ l_1(\tg{t},\tfrac{1}{2})$, where $l_1$ is defined by \eqref{eq:l1}.
  Then, for all $l\geq\tg{l}(\tg{t})$, the following implication holds:
  \begin{equation*}
    \msb{x}=(x,m)\in\gamma_{B(0,2l+1)^c} \implies B(x,\norm{m})\cap B(0,l)=\emptyset.
  \end{equation*}
\end{Lemma}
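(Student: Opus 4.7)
The plan is to reduce the geometric conclusion to a quantitative bound on $\norm{m}$ in terms of $\abs{x}$. Indeed, for open Euclidean balls the disjointness $B(x, \norm{m}) \cap B(0, l) = \emptyset$ is equivalent, by the triangle inequality, to $\abs{x} \geq \norm{m} + l$. So the task amounts to showing that the mark of any point $x$ with $\abs{x} \geq 2l + 1$ satisfies $\norm{m} \leq \abs{x} - l$.

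The natural tool is the estimate \eqref{eq:boundm} already proved in Lemma \ref{lemma:temp}, which asserts that for any integer $l' \geq l_1(\tg{t}, \eta)$ every mark carried by a point inside $B(0, l')$ is bounded by $\eta l'$. I would apply this with $\eta = \tfrac{1}{2}$ at the smallest integer radius $l' \in \IN^*$ that strictly contains $x$, namely $l' \defeq \lfloor \abs{x} \rfloor + 1$. By construction $\abs{x} < l' \leq \abs{x} + 1$, and the hypotheses $l \geq \tg{l}(\tg{t}) = \tfrac{1}{2}\,l_1(\tg{t}, \tfrac{1}{2})$ together with $\abs{x} \geq 2l+1$ force $l' > l_1(\tg{t}, \tfrac{1}{2})$, so \eqref{eq:boundm} applies at this radius and yields
$$\norm{m} \leq \tg{m}(\gamma_{B(0, l')}) \leq \tfrac{1}{2}\, l' \leq \tfrac{1}{2}(\abs{x} + 1).$$

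Plugging this into $\norm{m} + l$ and using $\abs{x} \geq 2l+1$ once more gives $\norm{m} + l \leq \tfrac{1}{2}(\abs{x} + 2l + 1) \leq \abs{x}$, which is the desired disjointness. The only subtle point in this plan is the choice of $l'$: one simultaneously needs $l' \geq l_1(\tg{t}, \tfrac{1}{2})$ (to invoke the previous lemma) and $l'$ close enough to $\abs{x}$ (to keep the resulting bound on $\norm{m}$ tight enough to conclude). The factor $\tfrac{1}{2}$ built into the definition of $\tg{l}(\tg{t})$ is precisely what reconciles these two requirements, and I expect no further obstacle.
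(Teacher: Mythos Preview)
Your proposal is correct and follows essentially the same route as the paper's proof: both apply the bound \eqref{eq:boundm} with $\eta=\tfrac{1}{2}$ at an integer radius just above $\abs{x}$, then finish with the elementary arithmetic $\abs{x}-\norm{m}\geq \tfrac{1}{2}\abs{x}-\tfrac{1}{2}\geq l$. The only cosmetic difference is that you take $l'=\lfloor\abs{x}\rfloor+1$ whereas the paper uses $\lceil\abs{x}\rceil$; your choice is slightly cleaner, since it guarantees $\abs{x}<l'$ even when $\abs{x}$ is an integer (so that $(x,m)$ genuinely lies in the open ball $B(0,l')$).
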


\begin{proof}
  Let $\gamma\in\M^{\tg{t}}$ and $(x,m)\in\gamma$ such that $\lvert x\rvert \geq 2l +1$.

  By definition of $l_1(\tg{t},\tfrac{1}{2})$, since $(x,m)\in\gamma_{B(0,\lceil x\rceil)}$,
  \begin{equation*}
  \begin{split}
    \lvert x\rvert - \norm{m} \overset{\eqref{eq:boundm}}{\geq} \lvert x\rvert - \tfrac{1}{2}\lceil\lvert x\rvert\rceil \geq \tfrac{1}{2}\lvert x\rvert - \tfrac{1}{2}\geq l.
  \end{split}
  \end{equation*}
\end{proof}

\begin{figure}[ht]
  \hspace{2em}
  \begin{minipage}[c]{0.45\textwidth}
    \includegraphics[scale=.45]{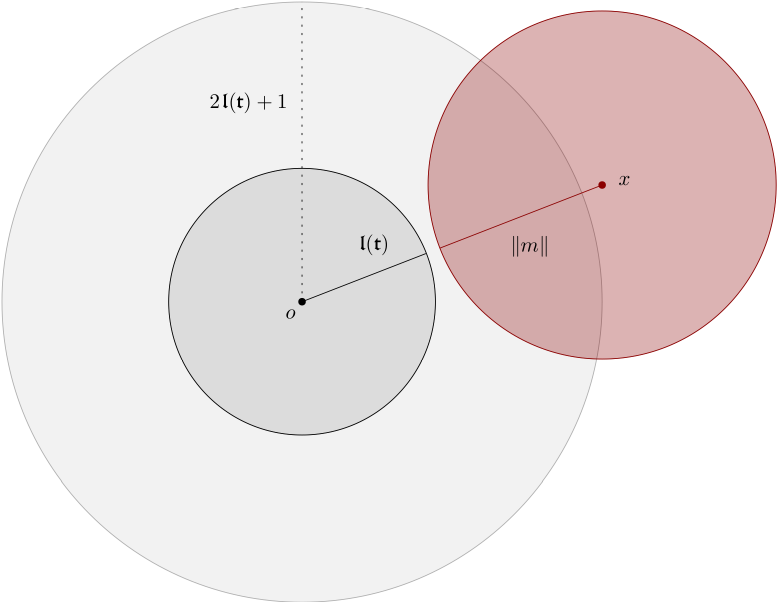}
  \end{minipage}\hfill
  \begin{minipage}[c]{0.38\textwidth}
    \caption{For \mbox{$(x,m)\in\gamma\in\M^\tg{t}$}, $\tg{t}\geq 1$, such that \mbox{$\lvert x\rvert\geq 2\, \tg{l}(\tg{t})+1$}, $B(x,\norm{m})$ does not intersect $B\big(0,\tg{l}(\tg{t})\big)$.}
    \label{fig:circles}
  \end{minipage}\hspace{2em}
\end{figure}

The assertion of Lemma \ref{lemma:range} is illustrated in Figure \ref{fig:circles}. Define the germ-grain set $\Gamma$ of a configuration $\gamma$ as usual by
  \begin{equation*}
    \Gamma \defeq \bigcup_{(x,m)\in\gamma} B(x,\norm{m})\subset\IR^d,
  \end{equation*}
  where the point $ x$ is the \emph{germ} and the ball $B(0,\norm{m})$ is the \emph{grain}. Lemma \ref{lemma:range} then implies that, for tempered configurations, only a finite number of balls of their germ-grain set can intersect a fixed bounded subset of $\IR^d$. This remark will be very useful when defining the range of the interaction in \eqref{H:range}.

\subsection{Energy functionals and finite-volume Gibbs measures}\label{section:energy}

For a fixed finite volume $\Lambda\subset\IR^d$, we consider, as reference marked point process, the Poisson point process $\pi_\Lambda^z$  on $\mathcal{E}$ with intensity measure \mbox{$z\ dx_\Lambda \otimes R(dm)$}. The coefficient $z$ is a positive real number, $dx_{\Lambda}$ is the Lebesgue measure on $\Lambda$, and the probability measure $R$ on $\mathcal{S}$ was introduced in Section \ref{section:marks}. In this toy model, since the spatial component of the intensity measure is diffuse, the configurations are a.s. simple. Moreover, the random marks of different points of the configuration are independent random variables.

To model and quantify a possible interaction between the point locations and the marks of a configuration, one introduces the general notion of energy functional.
\smallbreak

\begin{Definition}
  An \emph{energy} functional $H$ is a translation-invariant measurable functional on the space of finite configurations
  \begin{equation*}
    H:\M_f\rightarrow\IR\cup\{+\infty\}.
  \end{equation*}
   We use the convention $H(\underline{o}) = 0$.
\end{Definition}
\noindent Configurations with infinite energy will be negligible with respect to Gibbs measures.

\begin{Definition}
  For $\Lambda\in\mathcal{B}_b(\IR^d)$, the \emph{finite-volume Gibbs measure with free boundary condition} is the probability measure $P_\Lambda$ on $\M$ defined by
  \begin{equation}\label{eq:finiteGibbs}
    P_\Lambda(d\gamma) \defeq \dfrac{1}{Z_\Lambda}e^{-H(\gamma_{\Lambda})}\, \pi^z_{\Lambda}(d\gamma).
  \end{equation}
\end{Definition}
The normalisation constant $Z_\Lambda$ is called \emph{partition function}. We will see in Lemma \ref{lemma:partfunctionfinite} why this quantity is well defined under the assumptions we work with.

Notice how $\pi^z_{\Lambda}$ -- and therefore $P_\Lambda$ -- is actually concentrated on $\M_{\Lambda}$, the finite point configurations with atoms in $\Lambda$.

\vspace{2mm}
The measure $\pi_\Lambda^z$ extends naturally to an infinite-volume measure $\pi^z$; the question we explore in this work is how to do the same for $P_\Lambda$.
The first step in order to define an infinite-volume Gibbs measure is to be able to consider the energy of configurations with infinitely many points. In order to do this, we approximate any (tempered) configuration $\gamma$ by a sequence of finite ones $(\gamma_{\Lambda_n})_n$. Using a terminology that goes back to F\"ollmer \cite{foellmer_1973}, we introduce the following

\begin{Definition}\label{def:locEnergy}
  For $\Lambda\in\mathcal{B}_b(\IR^d)$, the \emph{conditional energy of $\gamma$ on $\Lambda$ given its environment} is the functional $H_\Lambda$ defined, on the tempered configurations, as the following limit:
  \begin{equation}\label{eq:locenergy}
    H_\Lambda(\gamma) = \lim\limits_{n\rightarrow\infty} \Big(H(\gamma_{\Lambda_n}) - H(\gamma_{\Lambda_n\setminus\Lambda})\Big), \quad \gamma\in\M^{\temp},
  \end{equation}
  where  $\Lambda_n \defeq [-n,n)^d$ is an increasing sequence of centred cubes of volume $(2n)^d$, converging to $\IR^d$.
\end{Definition}

\begin{Remarks}
  \begin{enumerate}[label=\emph{\roman*.}]
    \item Notice that the conditional energy of finite configurations confined in $\Lambda$ coincides with their energy: $H_\Lambda(\gamma_\Lambda)\equiv H(\gamma_\Lambda)$. In general, however, the conditional energy $H_\Lambda(\gamma)$ of an infinite configuration $\gamma$ does not reduce to $H(\gamma_\Lambda)$ because of the possible interaction between (external) points of $\gamma_{\Lambda^c}$ and (internal) points of $\gamma_\Lambda$. In other words, the conditional energy is possibly not a local functional. In this paper, we are interested in this general framework.
    \item Indeed, we will work with energy functionals $H$ for which the limit in  \eqref{eq:locenergy} is stationary, i.e. reached for a finite $n$ (that depends on $\gamma$). Assumption $(\mathcal{H}_{r})$ below ensures this property.
    \item Since $\pi^z_{\Lambda}$ only charges configurations in $\Lambda$, $P_\Lambda$ can be equivalently defined as
    \begin{equation*}
      P_\Lambda(d\gamma) = \dfrac{1}{Z_\Lambda}e^{-H_{\Lambda}(\gamma)}\pi^z_{\Lambda}(d\gamma).
    \end{equation*}
  \end{enumerate}
\end{Remarks}
The key property of such conditional energy functionals is the following additivity; the proof of this lemma is analogous to the one in \cite{dereudre_2009}, Lemma 2.4, that works in the more specific setting of Quermass-interaction processes.

\begin{Lemma}
  The family of conditional energy functionals is additive, i.e. for any $\Lambda\subset\Delta\in\mathcal{B}_b(\IR^d)$, there exists a measurable function $\phi_{\Lambda,\Delta}:\M^{\temp}\rightarrow\IR$ such that
  \begin{equation}\label{eq:additivity}
    H_{\Delta}(\gamma) =  H_{\Lambda}(\gamma) + \phi_{\Lambda,\Delta}(\gamma_{\Lambda^c}),
\quad
\gamma\in\M^{\temp}.
  \end{equation}
\end{Lemma}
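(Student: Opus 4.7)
The plan is to define $\phi_{\Lambda,\Delta}$ simply as the difference $H_\Delta - H_\Lambda$ and then verify that this difference is a function of $\gamma_{\Lambda^c}$ alone. No hard analytic estimate is needed: the argument is essentially bookkeeping on Definition \ref{def:locEnergy}, once one notices that the pieces of $\gamma$ involved in approximating $H_\Delta$ and $H_\Lambda$ eventually sit outside $\Lambda$.

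Concretely, for $\gamma\in\M^{\temp}$, I would write out the two defining limits and subtract them. Since $H_\Lambda(\gamma)$ and $H_\Delta(\gamma)$ both exist (as tempered-configuration limits), I obtain
\begin{equation*}
  H_\Delta(\gamma) - H_\Lambda(\gamma) \;=\; \lim_{n\to\infty}\Big[\, H(\gamma_{\Lambda_n\setminus\Lambda}) - H(\gamma_{\Lambda_n\setminus\Delta})\,\Big],
\end{equation*}
because the two $H(\gamma_{\Lambda_n})$ terms cancel. For $n$ large enough that $\Delta\subset\Lambda_n$, both $\Lambda_n\setminus\Lambda$ and $\Lambda_n\setminus\Delta$ lie in $\Lambda^c$, so $\gamma_{\Lambda_n\setminus\Lambda} = (\gamma_{\Lambda^c})_{\Lambda_n\setminus\Lambda}$ and similarly for $\Delta$. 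Hence every term of the sequence depends on $\gamma$ only through $\gamma_{\Lambda^c}$, and therefore so does its limit. Setting
\begin{equation*}
  \phi_{\Lambda,\Delta}(\gamma_{\Lambda^c}) \defeq \lim_{n\to\infty}\Big[\, H(\gamma_{\Lambda_n\setminus\Lambda}) - H(\gamma_{\Lambda_n\setminus\Delta})\,\Big]
\end{equation*}
yields the additivity identity \eqref{eq:additivity}.

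It remains to check measurability. Each map $\gamma\mapsto H(\gamma_{\Lambda_n\setminus\Lambda})$ is measurable: the restriction $\gamma\mapsto\gamma_{\Lambda_n\setminus\Lambda}$ is measurable into $\M_f$ by the definition of the canonical $\sigma$-algebra on $\M$, and $H$ is a measurable functional on $\M_f$; similarly for the $\Delta$ version. A pointwise limit of measurable functions is measurable, so $\phi_{\Lambda,\Delta}$ is measurable on $\M^{\temp}$.

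The only potential subtlety I see is whether one should invoke the eventual stationarity of the limit in \eqref{eq:locenergy} (promised in Remark ii under Assumption $(\mathcal{H}_r)$). If one does so, then for fixed $\gamma$ the sequence inside the limit is eventually constant, which makes the existence and measurability discussions essentially automatic; but the argument above works under the sole hypothesis that $H_\Lambda$ and $H_\Delta$ are well-defined on $\M^{\temp}$, so no additional structure is required. Dereudre's proof in \cite{dereudre_2009} for the Quermass case follows the same template, and the only change here is the replacement of his finite-range bookkeeping by the observation that $\Lambda_n\setminus\Lambda$ and $\Lambda_n\setminus\Delta$ both live in $\Lambda^c$.
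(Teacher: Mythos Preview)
Your argument is correct and is precisely the template the paper has in mind: it omits its own proof and simply points to \cite{dereudre_2009}, Lemma 2.4, whose proof is exactly the subtraction-and-restriction bookkeeping you wrote out. Nothing further is needed.
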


Let us now describe the framework of our study, by considering for the energy functional $H$ a global stability assumption $(\mathcal{H}_{st})$, a range assumption $(\mathcal{H}_{r})$ and a locally-uniform stability assumption $(\mathcal{H}_{loc.st})$:
\begin{description}
  \item[$(\mathcal{H}_{st})$] There exists a constant $\tg{c}\geq0$ such that the following \emph{stability} inequality holds
  \begin{equation}\label{H:stab}
    H(\gamma)\geq -\tg{c} \, \langle\gamma,1+\norm{m}^{d+\delta}\rangle,
\quad \gamma\in\M_f.
  \end{equation}
  \item[$(\mathcal{H}_{r})$] Fix $\Lambda\in\mathcal{B}_b(\IR^d)$.
For any $\gamma\in\M^{\tg{t}},\ \tg{t}\geq 1$, there exists a positive finite number $\tg{r} = \tg{r}(\gamma,\Lambda) $ such that
  \begin{equation}\label{H:range}
    H_\Lambda(\gamma) = H\big(\gamma_{\Lambda\oplus B(0,\tg{r})}\big) - H\big(\gamma_{\Lambda\oplus B(0,\tg{r}))\setminus \Lambda}\big),
  \end{equation}
  where $\Lambda\oplus B(0,\tg{r}) \defeq \big\{x\in\IR^d : \exists y\in\Lambda,\ \lvert y-x\rvert \leq \tg{r} \big\}$, and 
  \begin{equation*}
    \tg{r}(\gamma,\Lambda) = 3 \tg{d}(\Lambda) + 2 \,\tg{l}(\tg{t}) + 2\, \tg{m}(\gamma_\Lambda) + 1,
  \end{equation*}
  where $\tg{d}(\Lambda)$ is the smallest $R>0$ such that $\Lambda\subset B(0,R)$. \footnote{An earlier version of this work stated the range assumption without the $3 \tg{d}(\Lambda)$ term in $\tg{r}$, which does not allow for the examples we consider here. We thank V. Bene\v{s} and M. Petr\'akov\'a for pointing this out. The proof was still correct, but the examples satisfied the assumption only for $\Lambda$ being a centred ball.} Equivalently, the limit in \eqref{eq:locenergy} is already attained at the smallest $n\geq 1$ such that $\Lambda_n\supset \Lambda\oplus B(0,\tg{r})$.
  %\textcolor{blue}{Nelle notazioni di prima, $\Lambda\oplus B(0,\tg{r}(\gamma,\Lambda))$, con $\tg{r}(\gamma,\Lambda)$ il pi\`u piccolo valore per cui $B(0,2 \tg{d}(\Lambda) + 2 \,\tg{l}(\tg{t}) + 2\, \tg{m}(\gamma_\Lambda) + 1)\subset \Lambda\oplus B(0,k)$.}
   %Equivalently, $\tg{r}(\gamma,\Lambda)$ is the smallest $r>0$ such that $B(0,2 \tg{d}(\Lambda) + 2 \,\tg{l}(\tg{t}) + 2\, \tg{m}(\gamma_\Lambda) + 1)\subset \Lambda\oplus B(0,r)$. 
  \item[$(\mathcal{H}_{loc.st})$] Fix $\Lambda\in\mathcal{B}_b(\IR^d)$. For any $\tg{t}\geq1$ there exists a constant $\tg{c}' = \tg{c}'(\Lambda,\tg{t}) \geq 0$ such that the following \emph{stability} of the conditional energy holds, uniformly for all $\xi\in\M^{\tg{t}}$:
  \begin{equation}\label{H:locstab}
    H_\Lambda(\gamma_\Lambda\xi_{\Lambda^c}) \geq -\tg{c}'\langle\gamma_\Lambda,1+\norm{m}^{d+\delta}\rangle,\quad \gamma_\Lambda\in\M_\Lambda.
  \end{equation} \vspace{2mm}
\end{description}

\begin{Remarks}
	\begin{enumerate}[label=\emph{\roman*.}]
		\item Notice how the stability assumption $(\mathcal{H}_{st})$ is weaker than the usual Ruelle stability $H(\gamma)\geq - \tg{c}\lvert\gamma\rvert = -\tg{c}\langle \gamma, 1\rangle$, for the presence of the mark-dependent negative term $-\tg{c}\langle \gamma, \norm{m}^{d+\delta}\rangle$.
		\item Assumption \eqref{H:range} has the following interpretation: there is no influence from the points of $\gamma_{(\Lambda\oplus B(0,\tg{r}))^c}$ on the points of $\gamma_{\Lambda}$: \mbox{$H_\Lambda(\gamma) = H_\Lambda(\gamma_{\Lambda\oplus B(0,\tg{r})})$}. The form of the range $\tg{r}$ models the case where two points $\msb{x}=(x,m),\msb{y}=(y,n)\in\mathcal{E}$ of a configuration $\gamma$ are not in interaction whenever $B(x,\norm{m})\cap B(y,\rVert n\rVert)=\emptyset$.
		
		It is easy to see from Lemma \ref{lemma:range} that $H_\Lambda(\gamma) = H_\Lambda(\gamma_{B(0,2\tg{d}(\Lambda)+2\tg{l}(\tg{t})+2\tg{m}(\gamma_\Lambda)+1)})$. Therefore, the \emph{range} of the energy $H_\Lambda$ at the configuration $\gamma$ is smaller  than $\tg{r}(\gamma,\Lambda)$, which is \emph{finite but random} since it depends on $\gamma$. This range may not be uniformly bounded when $\gamma$ varies.	\end{enumerate}
	\vspace{2mm}
\end{Remarks}

\begin{Lemma} \label{lemma:partfunctionfinite}
  Under assumptions $(\mathcal{H}_{st})$ and $(\mathcal{H}_{m})$ the partition function $Z_\Lambda$ is well defined, that is, it is finite and positive.
\end{Lemma}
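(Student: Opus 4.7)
The plan is to treat positivity and finiteness of $Z_\Lambda = \int e^{-H(\gamma_\Lambda)}\pi^z_\Lambda(d\gamma)$ separately, both being essentially direct consequences of the stated assumptions once combined with the Laplace functional of the reference Poisson process.

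For \emph{positivity}, the argument is immediate: the Poisson measure $\pi^z_\Lambda$ charges the singleton $\{\underline{o}\}$ with mass $e^{-z\lvert\Lambda\rvert}>0$, and by the convention $H(\underline{o})=0$ this contributes $e^{-z\lvert\Lambda\rvert}$ to $Z_\Lambda$, so $Z_\Lambda \geq e^{-z\lvert\Lambda\rvert}>0$.

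For \emph{finiteness}, I would first apply $(\mathcal{H}_{st})$ to turn $e^{-H(\gamma_\Lambda)}$ into a multiplicative functional of the configuration:
\begin{equation*}
  e^{-H(\gamma_\Lambda)} \leq \exp\!\Big(\tg{c}\,\langle\gamma_\Lambda,1+\norm{m}^{d+\delta}\rangle\Big) = \prod_{(x,m)\in\gamma_\Lambda} e^{\tg{c}(1+\norm{m}^{d+\delta})}.
\end{equation*}
Then the Laplace (generating) functional of the Poisson point process $\pi^z_\Lambda$ with intensity $z\,dx_\Lambda\otimes R(dm)$ yields
\begin{equation*}
  Z_\Lambda \;\leq\; \int\prod_{(x,m)\in\gamma_\Lambda} e^{\tg{c}(1+\norm{m}^{d+\delta})}\,\pi^z_\Lambda(d\gamma)
  \;=\; \exp\!\Big(z\lvert\Lambda\rvert\!\int_{\mathcal{S}}\!\big(e^{\tg{c}(1+\norm{m}^{d+\delta})}-1\big)R(dm)\Big).
\end{equation*}
Pushing forward under $m\mapsto\norm{m}$ transforms the inner integral into $\int_{\IR_+}(e^{\tg{c}(1+\marknorm^{d+\delta})}-1)\rho(d\marknorm)$.

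It remains to show this last integral is finite, which is the only place where any care is needed. I would split $\IR_+$ at the threshold $\marknorm_0\defeq\tg{c}^{1/\delta}$: on $[0,\marknorm_0]$ the integrand is bounded, hence integrable against the probability measure $\rho$; on $[\marknorm_0,+\infty)$ we have $\tg{c}\leq\marknorm^{\delta}$, so $\tg{c}\,\marknorm^{d+\delta}\leq\marknorm^{d+2\delta}$, and therefore
\begin{equation*}
  \int_{\marknorm_0}^{+\infty} e^{\tg{c}(1+\marknorm^{d+\delta})}\rho(d\marknorm)
  \;\leq\; e^{\tg{c}}\!\int_{\IR_+} e^{\marknorm^{d+2\delta}}\rho(d\marknorm) \;<\;+\infty,
\end{equation*}
where the final bound is exactly $(\mathcal{H}_m)$. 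The minor subtlety, and the reason the mark assumption is stated with the extra $\delta$, is precisely this comparison $\tg{c}\,\marknorm^{d+\delta}\leq\marknorm^{d+2\delta}$ which absorbs the stability constant for large marks; this is the only real point in the proof.
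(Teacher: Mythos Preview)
Your proof is correct and follows essentially the same route as the paper: positivity via the void configuration, finiteness via the stability bound $(\mathcal{H}_{st})$ combined with the Laplace functional of the Poisson process, reducing to the finiteness of $\int e^{\tg{c}\marknorm^{d+\delta}}\rho(d\marknorm)$. The paper simply invokes $(\mathcal{H}_m)$ at this last step without further comment, whereas you spell out the comparison $\tg{c}\,\marknorm^{d+\delta}\leq\marknorm^{d+2\delta}$ for $\marknorm\geq\tg{c}^{1/\delta}$ that makes the implication work; this is a welcome clarification of a point the paper leaves implicit.
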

\begin{proof}
  \begin{align*}
    Z_\Lambda&\geq\pi^z_\Lambda(\underline{o}) = e^{-z\lvert\Lambda\rvert}>0;\\
  \begin{split}
    Z_\Lambda &= \int e^{-H(\gamma_\Lambda)}\pi^z_\Lambda(d\gamma) \overset{\eqref{H:stab}}{\leq} \int e^{\tg{c}\langle \gamma_{\Lambda},1+\norm{m}^{d+\delta}\rangle}\pi^z_\Lambda(d\gamma)\\
    &\leq e^{-z\lvert\Lambda\rvert}\exp\big\{e^{\tg{c}}z\lvert\Lambda\rvert\int_{\IR_+}e^{\tg{c}\marknorm^{d+\delta}}\rho(d\marknorm)\big\}\overset{\eqref{H:mark}}{<}+\infty.
  \end{split}
  \end{align*}
\end{proof}
\vspace{1em}

We provide here examples of energy functionals on marked configurations, which satisfy the assumptions above. Section \ref{section:diffusion} provides, in the context of interacting diffusions, a further example of a pair interaction that acts on both locations and marks of a configuration, where the mark space is a path space.
\begin{Example}[Geometric multi-body interaction in $\IR^2$]\label{ex:quermass} Consider the marked-point state space $\mathcal{E}=\IR^2\times\IR_+$, and recall that one can associate, to any finite configuration $\gamma=\{(x_1,m_1),\dots,(x_N,m_N)\}$, $N\geq 1$, the germ-grain set
	\begin{equation*}
  		\Gamma	= \bigcup_{i=1}^N B(x_i,m_i)\subset\IR^2.
  	\end{equation*}
  	Consider, as reference mark measure, a measure $R$ on $\IR_+$ satisfying $(\mathcal{H}_m)$, that is, there exists $\delta>0$ such that
  	\begin{equation*}
  		\int_{\IR_+}e^{\ell^{2+2\delta}}R(d\ell)<+\infty.
  	\end{equation*}
  	The \emph{Quermass energy functional} $H^Q$ (see \cite{kendall_van_lieshout_baddeley_1999}) is defined as any linear combination of area, perimeter, and Euler-Poincar\'{e} characteristic functionals:
  	\begin{equation*}
    	H^Q(\gamma) = \alpha_1 \Area(\Gamma) + \alpha_2 \Per(\Gamma) + \alpha_3 \chi(\Gamma), \ \alpha_1,\alpha_2,\alpha_3\in\IR.
   \end{equation*}
   Notice how this interaction, depending on the values of the parameters $\alpha_i$, can be attractive or repulsive. It is difficult (and not useful) to decompose this multi-body energy functional as the sum of several $k$-body interactions. The functional $H^Q$ satisfies assumptions $(\mathcal{H}_{st})$, $(\mathcal{H}_{r})$, and $(\mathcal{H}_{loc.st})$. Indeed, it even satisfies the following stronger conditions:
 	\begin{itemize}
   		\item there exists a constant $\tg{c}$ such that, for any finite configuration $\gamma$,
    	\begin{equation*}\tag{\emph{two-sided stability}}
      		\lvert H^Q(\gamma)\rvert\leq \tg{c}\langle\gamma,1+\norm{m}^{2}\rangle.
    	\end{equation*}
    	\item For any $\Lambda\in\mathcal{B}_b(\IR^2)$ and $\tg{t}\geq 1$, there exists $\tg{c}'(\Lambda,\tg{t})$ such that, for any $\gamma\in\M$, $\xi\in\M^{\tg{t}}$,
    	\begin{equation*}\tag{\emph{two-sided loc. stability}}
      		\lvert H^Q_\Lambda(\gamma_\Lambda\xi_{\Lambda^c})\rvert \leq \tg{c}'(\Lambda,\tg{t})\langle\gamma_\Lambda,1+\norm{m}^{2}\rangle.
    	\end{equation*}
  	\end{itemize}
	Under these stronger conditions than ours, the existence for the Quermass-interaction model was proved in \cite{dereudre_2009}; notice that $H^Q$ is not superstable.

	For more examples of geometric interactions, see \cite{dereudre_2019}.
\end{Example}

\begin{Example}[Two-body interactions]\label{ex:pair}
\begin{description}[font=\emph]
	\item[i. Interacting hard spheres of random radii] On $\mathcal{E}=\IR^d\times\IR_+$, consider a model of hard balls centred at points $x_i$, of random radii $m_i$ distributed according to a measure $R$ satisfying Assumption $(\mathcal{H}_m)$. The hard-core energy functional of a finite configuration $\gamma = \{(x_1,m_1),\dots(x_N,m_N)\}$, $N\geq 1$, is given by 
	\begin{equation*}
		H(\gamma)=\sum_{1\leq i<j\leq N}(+\infty)\ \1_{\{B(x_i,m_i)\cap B(x_j,m_j)\neq\emptyset\}},
	\end{equation*}
	with the convention $+\infty\cdot 0 =0$.
	\item[ii. Non-negative pair interaction] On $\mathcal{E}=\IR^d\times\mathcal{S}$, consider any energy functional $H$ of the form $H(\gamma)=\sum_{1\leq i<j\leq N}\Phi(\msb{x}_i,\msb{x}_j)$, where
\begin{equation*}
	\Phi(\msb{x}_i,\msb{x}_j) = \phi(\lvert x_i-x_j\rvert)\ \1_{\{\lvert x_i-x_j\rvert\leq\norm{m_i}+\norm{m_j}\}},
\end{equation*}
where $\phi$ is non-negative and null at $0$.
\end{description}
In both cases, since $H$ is a non-negative functional, it satisfies $(\mathcal{H}_{st})$ and $(\mathcal{H}_{loc.st})$. It is also easy to see that, by construction, the range assumption $(\mathcal{H}_r)$ also holds.
\end{Example}

\subsection{Local topology}

We endow the space of point measures with the topology of local convergence (see \cite{georgii_2011}, \cite{georgii_zessin_1993}), defined as the weak* topology induced by a class of functionals on $\M$ which we now introduce.
\begin{Definition}\label{def:tame}
  A functional $F$ is called \emph{tame} if there exists a constant $c>0$ such that
  \begin{equation*}
    \lvert F(\gamma)\rvert\leq c \ \big(1+\langle\gamma, 1+\norm{m}^{d+\delta}\rangle\big), \quad \gamma\in\M.
  \end{equation*}

We denote by $\mathcal{L}$ the set of all tame and local functionals. The topology $\tau_\mathcal{L}$ of \emph{local convergence} on $\mathcal{P}(\M)$ is then defined as the weak* topology induced by $\mathcal{L}$, i.e. the smallest topology on $\mathcal{P}(\M)$ under which all the mappings $P\mapsto\int F \,dP$, \mbox{$F\in\mathcal{L}$}, are continuous.
\end{Definition}

\section{Construction of an infinite-volume Gibbs measure}

Let us first precise the terminology (see \cite{georgii_1979}).
\begin{Definition}
  Let $H$ be an energy functional satisfying the three assumptions $(\mathcal{H}_{st})$, $(\mathcal{H}_{r})$, and $(\mathcal{H}_{loc.st})$.
  We say that a probability measure $P$ on $\M$ is an \emph{infinite-volume Gibbs measure with energy functional} $H$ if, for every finite volume $\Lambda\subset\IR^d$ and for any measurable, bounded and local functional $F\colon\M\rightarrow\IR$, the following identity (called \emph{DLR equation} after Dobrushin--Lanford--Ruelle) holds under $P$:
  \begin{equation}\label{DLR}\tag*{(DLR)$_\Lambda$}
    \int_{\M} F(\gamma)\, P(d\gamma) =
\int_{\M}\int_{\M_\Lambda}F(\gamma_\Lambda\xi_{\Lambda^c})\ \Xi_\Lambda(\xi,d\gamma)\; P(d\xi),
  \end{equation}
  where $\Xi_\Lambda$, called the \emph{Gibbsian probability kernel associated to $H$}, is defined on $\mathcal{M}_\Lambda$ by
  \begin{equation}\label{def:XiLambda}
    \Xi_\Lambda(\xi,d\gamma)
\defeq \dfrac{e^{-H_\Lambda(\gamma_\Lambda\xi_{\Lambda^c})}}{Z_\Lambda(\xi)}\pi^z_\Lambda(d\gamma),
  \end{equation}
  where $Z_\Lambda(\xi)\defeq\int_{\M_\Lambda}e^{-H_\Lambda(\gamma_\Lambda\xi_{\Lambda^c})}\pi^z_\Lambda(d\gamma)$.
\end{Definition}

\begin{Remarks}
  \begin{enumerate}
    \item The probability kernel $\Xi_\Lambda(\xi,\cdot)$ is not necessarily  well-defined for any $\xi \in \M$. In Lemma \ref{lemma:kernel}, we will show that this is the case when we restrict it to the subspace $\M^{\temp}$.
    \item The map $\xi\mapsto\Xi_\Lambda(\xi,d\gamma)$ is a priori \emph{not local} since
$\xi\mapsto H_\Lambda(\gamma_\Lambda\xi_{\Lambda^c})$ may depend on the full configuration $\xi_{\Lambda^c}$.
    \item The renormalisation factor  $Z_\Lambda(\xi)$ -- when it exists -- only depends on the external configuration $\xi_{\Lambda^c}$. Therefore $\Xi_\Lambda(\xi,\cdot)\equiv \Xi_\Lambda(\xi_{\Lambda^c},\cdot)$.
  \end{enumerate}
\end{Remarks}
We can now state the main result of this paper:
\begin{Theorem}\label{thm:Gibbs}
  Under assumptions $(\mathcal{H}_{m})$, $(\mathcal{H}_{st})$, $(\mathcal{H}_{r})$, and $(\mathcal{H}_{loc.st})$, there exists at least one infinite-volume Gibbs measure with energy functional $H$.
\end{Theorem}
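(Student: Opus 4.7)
The plan is to construct $P$ as a subsequential accumulation point, in the local convergence topology $\tau_\mathcal{L}$, of a stationarized version of the finite-volume Gibbs measures $(P_{\Lambda_n})_n$ on the exhausting cubes $\Lambda_n = [-n,n)^d$, and then to verify the DLR equations on the limit. The central analytic tool is the specific entropy with respect to the stationary Poisson process $\pi^z$: by the theorem of Georgii--Zessin \cite{georgii_zessin_1993}, its sub-level sets on the space of stationary probability measures on $\M$ are relatively compact in $\tau_\mathcal{L}$.

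The first step is a uniform entropy bound. Let $\bar P_n$ denote the spatially stationarized (periodized) version of $P_{\Lambda_n}$. From the Gibbsian form \eqref{eq:finiteGibbs}, the relative entropy of $P_{\Lambda_n}$ with respect to $\pi^z_{\Lambda_n}$ equals $\IE_{P_{\Lambda_n}}[-H(\gamma_{\Lambda_n})] - \log Z_{\Lambda_n}$. The stability $(\mathcal{H}_{st})$ provides the upper bound $-H \leq \tg{c}\langle\gamma, 1+\norm{m}^{d+\delta}\rangle$, whose expectation under $P_{\Lambda_n}$ I would control via a change of measure back to $\pi^z_{\Lambda_n}$, using the Laplace-transform finiteness that follows from $(\mathcal{H}_m)$ (the same computation as in Lemma \ref{lemma:partfunctionfinite}). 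A lower bound $\log Z_{\Lambda_n} \geq -z|\Lambda_n|$ (keep only the empty configuration) together with the fact that spatial stationarization does not increase specific entropy then yields $I(\bar P_n | \pi^z) \leq C$ uniformly in $n$.

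By Georgii--Zessin's compactness theorem, a subsequence of $(\bar P_n)$ converges to a stationary limit $P$ in $\tau_\mathcal{L}$, with $I(P | \pi^z) \leq C$. I would next show that $P$ is concentrated on tempered configurations. For each $l$, the functional $\gamma \mapsto \langle \gamma_{B(0,l)}, 1+\norm{m}^{d+\delta}\rangle$ is tame and local, hence in $\mathcal{L}$. Via the equi-integrability property valid on entropy level sets (Lemma \ref{lemma:EI}) together with $(\mathcal{H}_m)$, its $P$-expectation is bounded by a constant multiple of $l^d$. Markov's inequality combined with a Borel--Cantelli argument then yields $P(\M^\temp) = 1$, so in particular the Gibbsian kernel $\Xi_\Lambda(\cdot,\cdot)$ is $P$-a.s.\ well-defined.

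The main obstacle is the verification of the DLR equation for $P$. Fix $\Lambda \in \mathcal{B}_b(\IR^d)$ and a bounded local $F$. For $n$ large enough that $\Lambda \subset \Lambda_n$, the finite-volume consistency (chain rule of the Gibbs construction, combined with the additivity \eqref{eq:additivity}) gives a DLR identity under $P_{\Lambda_n}$, which transfers to $\bar P_n$ up to boundary corrections that vanish as $n \to \infty$. The left side converges to $\IE_P[F]$ since $F \in \mathcal{L}$. The difficulty is the right side: the integrand $\xi \mapsto \int F(\gamma_\Lambda \xi_{\Lambda^c})\, \Xi_\Lambda(\xi, d\gamma)$ is a priori \emph{not local} in $\xi$, because the range $\tg{r}(\xi, \Lambda)$ in $(\mathcal{H}_{r})$ depends on the full environment. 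To overcome this, I would truncate along the exhaustion $\M^\temp = \bigcup_\tg{t} \M^{\tg{t}}$: on $\M^{\tg{t}}$, Lemma \ref{lemma:range} gives the deterministic control $\tg{r}(\xi, \Lambda) \leq 2\tg{l}(\tg{t}) + 2\tg{m}(\xi_\Lambda) + 1$, so the integrand becomes local in the window $\Lambda \oplus B(0, \tg{r}_0)$ for an $\tg{r}_0$ depending only on $\tg{t}$ and $\tg{m}(\xi_\Lambda)$, and $(\mathcal{H}_{loc.st})$ together with $(\mathcal{H}_m)$ render it tame. Hence $\tau_\mathcal{L}$-convergence transfers the identity restricted to $\M^{\tg{t}}$ to $P$. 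The contribution from the complement $\M \setminus \M^{\tg{t}}$ is controlled uniformly in $n$ by the tempered-concentration bound established above (valid both for $\bar P_n$ and for $P$) and vanishes as $\tg{t} \to \infty$. This completes the DLR verification, and hence the proof.
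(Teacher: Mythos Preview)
Your overall architecture matches the paper's, but two steps contain genuine gaps.

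\textbf{Tempered concentration of $P$.} The argument ``Markov's inequality combined with a Borel--Cantelli argument'' does not close. From $\IE_P[\langle\gamma_{B(0,l)},1+\norm{m}^{d+\delta}\rangle]\leq C l^d$ Markov only gives $P(\langle\gamma_{B(0,l)},1+\norm{m}^{d+\delta}\rangle>\tg{t}l^d)\leq C/\tg{t}$, independent of $l$; this is not summable over $l$, so Borel--Cantelli fails. The paper instead uses the spatial ergodic theorem of Nguyen--Zessin: integrability of $\langle\gamma_{[-1,1)^d},1+\norm{m}^{d+\delta}\rangle$ under the stationary $P$ forces the spatial averages $l^{-d}\langle\gamma_{B(0,l)},1+\norm{m}^{d+\delta}\rangle$ to converge a.s.\ to a finite limit, which yields $P(\M^\temp)=1$ directly.

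\textbf{Uniform control along the approximating sequence.} This is the critical gap. In your DLR step you assert that the contribution from $\M\setminus\M^{\tg{t}}$ ``is controlled uniformly in $n$ by the tempered-concentration bound established above''. No such bound has been established: neither the ergodic theorem nor your Markov argument gives $\sup_n \bar P_n((\M^{\tg{t}})^c)\to 0$ as $\tg{t}\to\infty$. The paper explicitly flags this as a pitfall (see the Remark after Proposition~\ref{prop:supbarPn}): the uniform estimate in $\M^{\tg{t}}$ fails, and one must enlarge to the sets $\underline\M^l$ of configurations whose far-away grains do not reach $B(0,k)$ for all $k\geq l$. Uniformity on $\underline\M^l$ is then obtained via the equi-integrability Lemma~\ref{lemma:EI} applied to $f(x,m)=1+\norm{m}^d$ on unit cells, summed along a spiral enumeration of $\IZ^d$. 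Without this device, the localization of $\xi\mapsto\int F\,\Xi_\Lambda(\xi,d\gamma)$ cannot be pushed through the limit. Note also that the range $\tg{r}$ in $(\mathcal H_r)$ depends on $\tg{m}(\gamma_\Lambda)$ for the \emph{inner} configuration $\gamma$, not on $\xi$; the paper handles this with an additional mark cut-off $m_0$ in the kernel $\Xi_\Lambda^{\Delta,m_0}$, which your truncation on $\M^{\tg{t}}$ alone does not provide.

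A smaller point: the entropy bound requires $\int\langle\gamma,1+\norm{m}^{d+\delta}\rangle\,P_n(d\gamma)\leq a_1|\Lambda_n|$. A naive change of measure using $e^{-H}\leq e^{\tg c\langle\gamma,1+\norm{m}^{d+\delta}\rangle}$ and $Z_{\Lambda_n}\geq e^{-z|\Lambda_n|}$ produces a bound exponential in $|\Lambda_n|$, not linear. The paper obtains the linear estimate by splitting the configuration space into three regimes and applying Cram\'er--Chernoff large-deviation bounds (Lemma~\ref{lemma:bounds}); your sketch should at least indicate that something beyond ``the same computation as in Lemma~\ref{lemma:partfunctionfinite}'' is needed here.
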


\noindent
This section will have the following structure.
\begin{enumerate}[label=3.\arabic*,leftmargin=*]
  \item We define a sequence of stationarised finite-volume Gibbs measures $(\bar{P}_n)_n$.
  \item We use uniform bounds on the entropy to show the convergence, up to a subsequence, to an infinite-volume measure $\bar{P}$.
  \item We prove, using an ergodic property, that $\bar{P}$ carries only the space of tempered configurations.
  \item Noticing that, for any fixed $\Lambda\in\mathcal{B}_b(\IR^d)$, $\bar{P}_n$ does not satisfy \ref{DLR}, we introduce a new sequence $(\hat{P}_n)_n$ asymptotically equivalent to $(\bar{P}_n)_n$ but satisfying \ref{DLR}. We use appropriate approximations, by localising the interaction, to show that also $\bar{P}$ satisfies \ref{DLR}.
\end{enumerate}

\subsection{A stationarised sequence}\label{section:stationary}

In this subsection, we extend each finite-volume measure $P_n \defeq P_{\Lambda_n}$, $\Lambda_n = [-n,n)^d$, defined on $\M_{\Lambda_n}$ to a probability measure $\bar{P}_n$ on the full space $\M$, invariant under lattice-translations.
\smallbreak

We start with the following
\begin{Lemma}\label{lemma:bounds}
	There exists a constant $a_1$ such that
	\begin{equation}\label{eq:entr0}
		\forall n\geq1, \quad J_n \defeq \int_{\M}\langle\gamma,1+\norm{m}^{^{d+\delta}}\rangle P_n(d\gamma)\leq a_1 \lvert\Lambda_n\rvert.
	\end{equation}
	\end{Lemma}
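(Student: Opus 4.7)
The strategy is an entropy variational argument. For any $\alpha>0$, apply the Donsker--Varadhan inequality
\[
\alpha\int f\,dP_n \;\leq\; \int \log\tfrac{dP_n}{d\pi^z_{\Lambda_n}}\,dP_n \;+\; \log\int e^{\alpha f}\,d\pi^z_{\Lambda_n}
\]
to $f(\gamma)\defeq\langle\gamma,1+\norm{m}^{d+\delta}\rangle$, so that $\int f\,dP_n = J_n$. The goal is to bound the right-hand side by a linear function of $|\Lambda_n|$ plus a multiple of $J_n$, then pick $\alpha>\tg{c}$ and absorb this multiple on the left.

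The relative-entropy term is straightforward: the explicit density $\tfrac{dP_n}{d\pi^z_{\Lambda_n}} = Z_{\Lambda_n}^{-1}\,e^{-H(\gamma_{\Lambda_n})}$ yields
\[
\int \log\tfrac{dP_n}{d\pi^z_{\Lambda_n}}\,dP_n = -\log Z_{\Lambda_n} - \int H(\gamma_{\Lambda_n})\,P_n(d\gamma) \;\leq\; z|\Lambda_n| + \tg{c}\,J_n,
\]
where we used stability $(\mathcal{H}_{st})$ on the energy term and the trivial lower bound $Z_{\Lambda_n}\geq e^{-z|\Lambda_n|}$ from Lemma \ref{lemma:partfunctionfinite} on the log-partition term. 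The Poisson Laplace functional then produces
\[
\log\int e^{\alpha f}\,d\pi^z_{\Lambda_n} = z|\Lambda_n|\,C(\alpha), \qquad C(\alpha)\defeq\int_{\mathcal{S}}\bigl(e^{\alpha(1+\norm{m}^{d+\delta})}-1\bigr)\,R(dm).
\]
Combining the two bounds and choosing any $\alpha>\tg{c}$ gives $(\alpha-\tg{c})\,J_n \leq z(1+C(\alpha))\,|\Lambda_n|$, which is the statement with $a_1\defeq z(1+C(\alpha))/(\alpha-\tg{c})$.

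The argument hinges on the precise numerology of the mark exponents: stability $(\mathcal{H}_{st})$ involves $\norm{m}^{d+\delta}$, whereas the reference mark measure has super-exponential moments of order $d+2\delta$ by $(\mathcal{H}_m)$. The gap $\delta$ is exactly what ensures $C(\alpha)<+\infty$ for \emph{every} $\alpha>0$ — via the elementary domination $\alpha\norm{m}^{d+\delta}\leq\norm{m}^{d+2\delta}+K_\alpha$ — so that one has full freedom to pick $\alpha$ larger than the stability constant $\tg{c}$; this is the single place where the stronger condition $(\mathcal{H}_m)$ (rather than mere $e^{\norm{m}^{d+\delta}}$-integrability) is indispensable. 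The only minor preliminary technicality is to check that $J_n<+\infty$ before rearranging, which follows directly from stability combined with the Poisson Laplace functional, in the same spirit as Lemma \ref{lemma:partfunctionfinite}, with the extra linear factor obtained by differentiating in the exponent.
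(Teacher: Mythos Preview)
Your proof is correct and is genuinely different from the paper's. The paper proceeds by a bare-hands decomposition: it splits $\M_{\Lambda_n}$ into three regions according to whether $\langle\gamma,1+\norm{m}^{d+\delta}\rangle$ and $\lvert\gamma\rvert$ exceed certain multiples of $\lvert\Lambda_n\rvert$, and on the two ``bad'' regions applies Cram\'er--Chernoff large-deviation bounds (for a Poisson variable and for i.i.d.\ sums of $1+\marknorm^{d+\delta}$ under $\rho$) together with Cauchy--Schwarz to show those contributions are $O(1)$. Your entropy/variational route packages the same ingredients --- stability $(\mathcal{H}_{st})$, the lower bound $Z_{\Lambda_n}\geq e^{-z\lvert\Lambda_n\rvert}$, and the Poisson Laplace functional --- into a single inequality and is considerably shorter; it also makes transparent the role of the exponent gap in $(\mathcal{H}_m)$ (the paper uses the same finiteness of $\int e^{\alpha\marknorm^{d+\delta}}\rho(d\marknorm)$ implicitly, for $\alpha=\tg{c}$ and $\alpha=2\tg{c}$). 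What the paper's decomposition buys is a more self-contained argument that does not invoke the Donsker--Varadhan inequality and exhibits the large-deviation mechanism explicitly, in line with the ``large-deviation tools'' mentioned in the abstract; your argument trades that for concision and a cleaner constant.
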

	\begin{proof}
    We partition the space of configurations $\M_{\Lambda_n}$ in three sets:
    \begin{description}
      \item $\M_{\Lambda_n}^{(1)}\defeq\{\gamma\in\M_{\Lambda_n}:\langle\gamma,1+\norm{m}^{d+\delta}\rangle\leq a_{11}\lvert\Lambda_n\rvert\}$,
      \item $\M_{\Lambda_n}^{(2)}\defeq\{\gamma\in\M_{\Lambda_n}:\langle\gamma,1+\norm{m}^{d+\delta}\rangle> a_{11}\lvert\Lambda_n\rvert,\ \lvert\gamma\rvert>a_{12}\lvert\Lambda_n\rvert\}$,
      \item $\M_{\Lambda_n}^{(3)}\defeq\{\gamma\in\M_{\Lambda_n}:\langle\gamma,1+\norm{m}^{d+\delta}\rangle> a_{11}\lvert\Lambda_n\rvert,\ \lvert\gamma\rvert\leq a_{12}\lvert\Lambda_n\rvert\}$,
    \end{description}
    for some constants $a_{11},a_{12}$ which will be fixed later.
Therefore, the integral $J_n$ can be written as the sum of three integrals, $J_n^{(1)},J_n^{(2)},J_n^{(3)}$, resp. over each of these sets.

    \smallbreak

    The first term is straightforward:
    \begin{equation*}
      J_n^{(1)} \defeq \int_{\M_{\Lambda_n}^{(1)}}\langle\gamma_{\Lambda_n},1+\norm{m}^{d+\delta}\rangle P_n(d\gamma)\leq a_{11}\lvert\Lambda_n\rvert.
    \end{equation*}

    For the second term,
    \begin{equation*}
    \begin{split}
        &J_n^{(2)} \defeq \int_{\M_{\Lambda_n}^{(2)}}\langle\gamma_{\Lambda_n},1+\norm{m}^{d+\delta}\rangle P_n(d\gamma) \\
              &\overset{\eqref{eq:finiteGibbs}}{\leq} \int_{\M_{\Lambda_n}}\1_{\{\lvert\gamma_{\Lambda_n}\rvert>a_{12}\lvert\Lambda_n\rvert\}}\langle\gamma,1+\norm{m}^{d+\delta}\rangle \dfrac{1}{Z_{\Lambda_n}}e^{-H(\gamma_{\Lambda_n})}\pi^z_{\Lambda_n}(d\gamma) \\
        &\overset{\mathclap{\eqref{H:stab}}}{\leq}\dfrac{e^{-z\lvert\Lambda_n\rvert}}{Z_{\Lambda_n}}\sum_{k=a_{12}\lvert\Lambda_n\rvert}^{+\infty}\dfrac{(z\lvert\Lambda_n\rvert)^k}{k!}\int_{\mathcal{S}^k} e^{\tg{c}\sum_{i=1}^{k}(1+\norm{m_i}^{d+\delta})} \sum_{j=1}^{k}(1+\norm{m_j}^{d+\delta})R(dm_1)\dots R(dm_k) \\
      &\leq\sum_{k=a_{12}\lvert\Lambda_n\rvert}^{+\infty}\dfrac{(z\lvert\Lambda_n\rvert)^k}{k!}\ k\ \left(\int(1+\marknorm^{d+\delta})e^{\tg{c}(1+\marknorm^{d+\delta})}\rho(d\marknorm)\right)\left(\int e^{\tg{c}(1+\marknorm^{d+\delta})}\rho(d\marknorm)\right)^{k-1}.
    \end{split}
    \end{equation*}
    Using \eqref{H:mark}, we are able to find a constant $b_1$ such that
    \begin{equation*}
      \int(1+\marknorm^{d+\delta})\, e^{\tg{c}(1+\marknorm^{d+\delta})}\rho(d\marknorm)\leq b_1.
    \end{equation*}
    We then get
    \begin{equation*}
      J_n^{(2)}\leq\sum_{k=a_{12}\lvert\Lambda_n\rvert}^{+\infty}\dfrac{(zb_1\lvert\Lambda_n\rvert)^k}{k!}k\leq \sum_{k=a_{12}\lvert\Lambda_n\rvert}^{+\infty}\dfrac{(2zb_1\lvert\Lambda_n\rvert)^k}{k!}\leq e^{2z b_1\lvert\Lambda_n\rvert}\IP\Big(S_{\lvert\Lambda_n\rvert}\geq a_{12}\lvert\Lambda_n\rvert\Big),
    \end{equation*}
    for a sequence $(S_m)_{m\geq1}$ of Poisson random variables with parameter $2z b_1 m$.\\
    Recalling the Cram\'{e}r-Chernoff inequality (cf. \cite{cramer_1938})
    \begin{equation*}
      \IP(\tfrac{1}{m}S_m\geq a_{12})\leq e^{-mL^*(a_{12})},
    \end{equation*}
    where $L^*(x) = 2zb_1 + x\log x - x(1+\log(2zb_1))$ is the Legendre transform associated to the Poisson random variable of parameter $2zb_1$, we can choose $a_{12}$ large enough, so that $\log a_{12}\geq 1+\log(2zb_1)$. Thus \mbox{$L^*(a_{12})\geq2zb_1$}, and we get that $J_{n}^{(2)}\leq1$.

    For the third term,
    \begin{equation*}
    \begin{split}
      J_n^{(3)}&\overset{\eqref{eq:finiteGibbs}}{=} \int_{\M_{\Lambda_n}}\1_{\{\langle\gamma,1+\norm{m}^{d+\delta}\rangle > a_{11}\lvert\Lambda_n\rvert,\ \lvert\gamma\rvert\leq a_{12}\lvert\Lambda_n\rvert\}}\langle\gamma,1+\norm{m}^{d+\delta}\rangle \dfrac{1}{Z_{\Lambda_n}}e^{-H(\gamma_{\Lambda_n})}\pi^z_{\Lambda_n}(d\gamma) \\
      &\leq \dfrac{e^{-z\lvert\Lambda_n\rvert}}{Z_{\Lambda_n}} \sum_{k=0}^{a_{12}\lvert\Lambda_n\rvert}\dfrac{(z\lvert\Lambda_n\rvert)^k}{k!}\int_{\mathcal{S}^k}\1_{\{ \sum_{i=1}^{k}(1+\norm{m_i}^{d+\delta}) > a_{11}\lvert\Lambda_n\rvert\}} \\
      &\phantom{\leq\sum_{k=0}^{A_4\lvert\Lambda_n\rvert}\dfrac{(z\Lambda_n)^k}{k!}\int\dots\int\1_{\{\sum_{(x,m)}}} e^{\tg{c}\sum_{i=1}^{k}(1+\norm{m_i}^{d+\delta})} \sum_{j=1}^{k}(1 + \norm{m_j}^{d+\delta})R(dm_1)\dots R(dm_k) \\
      &\leq \sum_{k=0}^{a_{12}\lvert\Lambda_n\rvert}\dfrac{(z\lvert\Lambda_n\rvert)^k}{k!}\int_{\IR^k_+}\1_{\{ \sum_{i=1}^{k}(1+\marknorm_i^{d+\delta}) > a_{11}\lvert\Lambda_n\rvert\}} \\
      &\phantom{\leq\sum_{k=0}^{A_4\lvert\Lambda_n\rvert}\dfrac{(z\Lambda_n)^k}{k!}\int\dots\int\1_{\{\sum_{(x,m)}}} e^{\tg{c}\sum_{i=1}^{k}(1+\marknorm_i^{d+\delta})} \sum_{j=1}^{k}(1 + \marknorm_j^{d+\delta})\rho(d\marknorm_1)\dots \rho(d\marknorm_k).
    \end{split}
    \end{equation*}

    Applying Cauchy-Schwarz inequality, we find:
    \begin{equation*}
    \begin{split}
      J_n^{(3)}\leq& \sum_{k=0}^{a_{12}\lvert\Lambda_n\rvert}\dfrac{(z\lvert\Lambda_n\rvert)^k}{k!}\sqrt{\rho^{\otimes k}\Big(\sum_{i=1}^{k}\big(1+\marknorm_i^{d+\delta}\big)>a_{11}\lvert\Lambda_n\rvert\Big)}\\
      &\sqrt{\int_{\IR^k_+} e^{2\tg{c}\sum_{i=1}^{k}(1+\marknorm_i^{d+\delta})}\Big(\sum_{j=1}^{k}(1+\marknorm_j^{d+\delta})\Big)^2\rho(d\marknorm_1)\dots \rho(d\marknorm_k)} \\
      \leq& \sqrt{\rho^{\otimes a_{12}\lvert\Lambda_n\rvert}\Big(\sum_{i=1}^{a_{12}\lvert\Lambda_n\rvert}\big(1+\marknorm_i^{d+\delta}\big)>a_{11}\lvert\Lambda_n\rvert\Big)} \\
		&\sum_{k=0}^{a_{12}\lvert\Lambda_n\rvert}\dfrac{(z\lvert\Lambda_n\rvert)^k}{k!}\sqrt{k^2\int(1+\marknorm^{d+\delta})^2e^{2\tg{c}(1+\marknorm^{d+\delta})}\rho(d\marknorm)\left(\int e^{2\tg{c}(1+\marknorm^d)}\rho(d\marknorm)\right)^{k-1}}.
   \end{split}
	\end{equation*}
	Using \eqref{H:mark}, there exists a positive constant $b_2$ such that
	\begin{equation*}
		\int (1+\marknorm^{d+\delta})^2e^{2\tg{c}(1+\marknorm^{d+\delta})}\rho(d\marknorm) \leq b_2.
	\end{equation*}
	Thus
	\begin{equation*}
	\begin{split}
		J_n^{(3)}&\leq \sqrt{\rho^{\otimes a_{12}\lvert\Lambda_n\rvert}\Big(\sum_{i=1}^{a_{12}\lvert\Lambda_n\rvert}\big(1+\marknorm_i^{d+\delta}\big)>a_{11}\lvert\Lambda_n\rvert\Big)} \sum_{k=0}^{a_{11}\lvert\Lambda_n\rvert}\dfrac{(z\sqrt{b_2}\lvert\Lambda_n\rvert)^k}{k!}k \\
		&\leq \sqrt{\rho^{\otimes a_{12}\lvert\Lambda_n\rvert}\Big(\sum_{i=1}^{a_{12}\lvert\Lambda_n\rvert}\big(1+\marknorm_i^{d+\delta}\big)>a_{11}\lvert\Lambda_n\rvert\Big)}\ e^{2z\sqrt{b_2}\lvert\Lambda_n\rvert}.
   	\end{split}
	\end{equation*}
	Using again the Cram\'{e}r-Chernoff inequality, we can choose $a_{11}$ large enough such that $\bar{L}^*$, the Legendre transform of the image measure of $\rho$ by $\marknorm\mapsto 1+\marknorm^{d+\delta}$, satisfies $\bar{L}^*(a_{11})\geq 4z\sqrt{b_2}$ (since it is stricly increasing on the positive half-line). Thus
	\begin{equation*}
   		\rho^{\otimes a_{12}\lvert\Lambda_n\rvert}\Big(\sum_{i=1}^{a_{12}\lvert\Lambda_n\rvert}\big(1+\marknorm_i^{d+\delta}\big)>a_{11}\lvert\Lambda_n\rvert\Big)\leq e^{-4z\sqrt{b_2}\lvert\Lambda_n\rvert},
   \end{equation*}
   	which yields $J_n^{(3)}\leq1$.\\
	Putting it all together, the claim of Lemma \ref{lemma:bounds} follows with $a_1\defeq a_{11}+2$.
	\end{proof}

We start by considering the probability measure $\tilde{P}_n$ on $\M$, under which the configurations in the disjoint blocks $\Lambda_n^\kappa \defeq \Lambda_n + 2n\kappa$, $\kappa\in\IZ^d$, are independent, with identical distribution $P_n$. We then build the empirical field associated to the probability measure $\tilde{P}_n$, i.e. the sequence of lattice-stationarised probability measures
\begin{equation} \label{def:Pbarn}
\bar{P}_n = \dfrac{1}{(2n)^d}\sum_{\kappa\in\Lambda_n\cap\IZ^d}\tilde{P}_n\circ\theta_\kappa^{-1},
\end{equation}
where $\theta_\kappa$ is the translation on $\IR^d$ by the vector $\kappa \in\IZ^d$.
\begin{Remarks}
  \begin{enumerate}
    \item As usual we identify the translation $\theta_\kappa$ on $\IR^d$ with the image of a point measure under such translation.
    \item So constructed, the probability measure $\bar{P}_n$ is invariant under $(\theta_\kappa)_{\kappa\in\IZ^d}$.
    \item An upper bound similar to \eqref{eq:entr0} holds also under $\bar{P}_n$:
    \begin{equation*}
      \exists a_2>0, \forall n\geq1,\quad \int_{\M}\langle\gamma_{\Lambda_n},1+\norm{m}^{^{d+\delta}}\rangle \bar{P}_n(d\gamma)\leq a_2 \, \lvert\Lambda_n\rvert.
    \end{equation*}
    Moreover, using stationarity and the fact that the covering $\Lambda_n = \bigcup_\kappa\Lambda_1^\kappa$ contains $n^d$ terms,
    \begin{equation}\label{eq:entrbarP}
    \begin{split}
  &\int_{\M}\langle\gamma_{\Lambda_1},1+\norm{m}^{^{d+\delta}}\rangle \bar{P}_n(d\gamma)
= \int_{\M}\dfrac{1}{n^d}\sum_\kappa \langle\gamma_{\Lambda_1^\kappa},1+\norm{m}^{^{d+\delta}}\rangle \bar{P}_n(d\gamma)\\
      &=\dfrac{1}{n^d}\int_{\M}\langle\gamma_{\Lambda_n},1+\norm{m}^{d+\delta}\rangle\bar{P}_n(d\gamma)\leq \dfrac{1}{n^d} (2n)^d a_2 = 2^d a_2.
    \end{split}
    \end{equation}
  \end{enumerate}
\end{Remarks}
\vspace{8mm}

As we will see in the following subsection, in order to prove that $(\bar{P}_n)_n$ admits an accumulation point, it is enough to prove that all elements of the sequence belong to the same entropy level set.

\subsection{Entropy bounds}\label{section:entropy}

Let us now introduce the main tool of our study, the specific entropy of a (stationary) probability measure on $\M$.
  \vspace{5mm}

\begin{Definition}
  Given two probability measures $Q$ and $Q'$ on $\M$, and any finite-volume $\Lambda\subset\IR^d$, the \emph{relative entropy} of $Q'$ with respect to $Q$ on $\Lambda$ is defined as
  \begin{equation*}
    I_{\Lambda}(Q\vert Q')\defeq\begin{cases*}
      \begin{aligned}
      &\int \log f\ dQ_{\Lambda} &\qquad\text{ if } Q_{\Lambda}\preccurlyeq Q'_{\Lambda} \text{ with } f\defeq\tfrac{dQ_{\Lambda}}{dQ'_{\Lambda}},\\
      &+\infty &\qquad\text{otherwise},
      \end{aligned}
    \end{cases*}
  \end{equation*}
  where $Q_\Lambda$ (resp. $Q'_\Lambda$) is the image of $Q$ (resp. $Q'$) under the mapping $\gamma\mapsto\gamma_\Lambda$.
\end{Definition}
As usual,
\begin{Definition}
  The \emph{specific entropy} of $Q$ with respect to $Q'$ is defined by
  \begin{equation*}
    \mathscr{I}(Q\vert Q') = \lim\limits_{n\rightarrow+\infty}\dfrac{1}{\lvert\Lambda_n\rvert}I_{\Lambda_n}(Q\vert Q').
  \end{equation*}
\end{Definition}
From now on, the reference measure $Q'$ will be the marked Poisson point process $\pi^z$ with intensity measure $z\, dx\otimes R(dm)$. In this case, the specific entropy of a probability measure $Q$ with respect to $\pi^z$ is always well defined if $Q$ is stationary under the lattice translations $(\theta_\kappa)_{\kappa\in\IZ^d}$. Moreover, recall that for any $a>0$, the $a$-entropy level set
\begin{equation*}
  \mathcal{P}(\M)_{\leq a} \defeq \Big\{Q\in\mathcal{P}(\M), \text{ stationary under } (\theta_\kappa)_{\kappa\in\IZ^d}: \mathscr{I}(Q\vert\pi^z)\leq a\Big\}
\end{equation*}
is relatively compact for the topology $\tau_\mathcal{L}$, as proved in \cite{georgii_zessin_1993}.

\begin{Proposition}\label{prop:entropy}
  There exists a constant $a_3>0$ such that,
  \begin{equation*}
  \forall n \geq 1, \quad	\mathscr{I}(\bar{P}_n\vert\pi^z)\leq a_3
  \end{equation*}
where $\bar{P}_n\in\mathcal{P}(\M)$ is the empirical field defined by \eqref{def:Pbarn}.
\end{Proposition}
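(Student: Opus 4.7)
The plan is to first bound the finite-volume relative entropy $I_{\Lambda_n}(P_n\vert\pi^z_{\Lambda_n})$ linearly in $\lvert\Lambda_n\rvert$, and then to exploit the product-plus-average structure of $\bar{P}_n$ -- namely that $\tilde{P}_n$ is an independent product of copies of $P_n$ on the disjoint boxes $\Lambda_n^\kappa$, and $\bar{P}_n$ is an average of $(2n)^d$ lattice translates of $\tilde{P}_n$ -- to transfer this bound to a uniform control on the specific entropy. The stability assumption $(\mathcal{H}_{st})$ combined with Lemma \ref{lemma:bounds} will handle the first step; convexity of relative entropy in its first argument together with monotonicity in the underlying $\sigma$-algebra will handle the second.

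For the first step, writing out the Radon--Nikodym density of $P_n$ with respect to $\pi^z_{\Lambda_n}$ yields
\begin{equation*}
I_{\Lambda_n}(P_n\vert\pi^z_{\Lambda_n}) = -\log Z_{\Lambda_n} - \int H(\gamma_{\Lambda_n})\, P_n(d\gamma).
\end{equation*}
The lower bound $Z_{\Lambda_n}\geq\pi^z_{\Lambda_n}(\underline{o}) = e^{-z\lvert\Lambda_n\rvert}$ from Lemma \ref{lemma:partfunctionfinite} gives $-\log Z_{\Lambda_n}\leq z\lvert\Lambda_n\rvert$. For the integral of $H$, stability $(\mathcal{H}_{st})$ combined with Lemma \ref{lemma:bounds} yields
\begin{equation*}
-\int H(\gamma_{\Lambda_n})\, P_n(d\gamma)\leq \tg{c}\int\langle\gamma_{\Lambda_n},1+\norm{m}^{d+\delta}\rangle\, P_n(d\gamma) = \tg{c}\, J_n \leq \tg{c}\, a_1\lvert\Lambda_n\rvert,
\end{equation*}
so that $I_{\Lambda_n}(P_n\vert\pi^z_{\Lambda_n})\leq(z+\tg{c}\, a_1)\lvert\Lambda_n\rvert$.

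For the second step, I fix a large cube $\Lambda_N$. The independent product structure of $\tilde{P}_n$ across the blocks $(\Lambda_n^\kappa)_{\kappa\in\IZ^d}$, combined with monotonicity of relative entropy in the underlying $\sigma$-algebra (enlarging $\Lambda_N$ to the smallest union of such blocks that contains it), gives
\begin{equation*}
I_{\Lambda_N}(\tilde{P}_n\vert\pi^z_{\Lambda_N})\leq k_N\cdot I_{\Lambda_n}(P_n\vert\pi^z_{\Lambda_n}),
\end{equation*}
where $k_N$ counts the blocks $\Lambda_n^\kappa$ intersecting $\Lambda_N$, and $k_N/\lvert\Lambda_N\rvert\to 1/\lvert\Lambda_n\rvert$ as $N\to\infty$. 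Translation invariance of $\pi^z$ extends the same bound to each translate $\tilde{P}_n\circ\theta_\kappa^{-1}$. Convexity of $Q\mapsto I_{\Lambda_N}(Q\vert\pi^z_{\Lambda_N})$ applied to the definition \eqref{def:Pbarn} of $\bar{P}_n$, together with dividing by $\lvert\Lambda_N\rvert$ and letting $N\to\infty$, then yields
\begin{equation*}
\mathscr{I}(\bar{P}_n\vert\pi^z)\leq\frac{1}{\lvert\Lambda_n\rvert}I_{\Lambda_n}(P_n\vert\pi^z_{\Lambda_n})\leq z+\tg{c}\, a_1,
\end{equation*}
so the conclusion holds with $a_3\defeq z+\tg{c}\, a_1$.

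The main obstacle, as I see it, lies in that second step: making rigorous the intuition that boundary effects -- partial blocks overlapping $\partial\Lambda_N$, and the discrepancy between $\Lambda_N$ and its shifts $\Lambda_N-\kappa$ for $\kappa\in\Lambda_n\cap\IZ^d$ -- contribute only a negligible correction to the specific entropy, \emph{uniformly in $n$}. Heuristically this is fine since boundaries are of order $N^{d-1}$ whereas $\lvert\Lambda_N\rvert$ is of order $N^d$; but the average defining $\bar{P}_n$ involves $(2n)^d$ translates, so one must verify that the $n$-dependent combinatorics of these boundary terms does not spoil the uniform-in-$n$ bound as $N\to\infty$.
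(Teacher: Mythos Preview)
Your first step is identical to the paper's: the same computation of $I_{\Lambda_n}(P_n\vert\pi^z_{\Lambda_n})$ via the explicit density, the same lower bound on $Z_{\Lambda_n}$, the same use of $(\mathcal{H}_{st})$ and Lemma~\ref{lemma:bounds}, and the same constant $a_3=z+\tg{c}\,a_1$.

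The second step is where you diverge, and your route works but is needlessly roundabout. First, your worry about uniformity in $n$ is misplaced: the specific entropy of $\bar{P}_n$ is computed by fixing $n$ and sending $N\to\infty$, so all you need is that for each fixed $n$ the boundary correction is $o(\lvert\Lambda_N\rvert)$ --- which it is, of order $nN^{d-1}$ --- and that the limiting bound does not depend on $n$, which is clear since it equals $\tfrac{1}{\lvert\Lambda_n\rvert}I_{\Lambda_n}(P_n\vert\pi^z_{\Lambda_n})\leq a_3$. No uniformity in $n$ of the $N$-asymptotics is required.

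The paper bypasses the boundary bookkeeping entirely by two observations. First, the specific entropy $Q\mapsto\mathscr{I}(Q\vert\pi^z)$ is \emph{affine} (not merely convex) on stationary measures, so the average in \eqref{def:Pbarn} turns into an exact equality $\mathscr{I}(\bar{P}_n\vert\pi^z)=\mathscr{I}(\tilde{P}_n\vert\pi^z)$, using also that $\mathscr{I}(\tilde{P}_n\circ\theta_\kappa^{-1}\vert\pi^z)=\mathscr{I}(\tilde{P}_n\vert\pi^z)$ by translation invariance of $\pi^z$. Second, to compute $\mathscr{I}(\tilde{P}_n\vert\pi^z)$ the paper takes the limit along the subsequence $2m\Lambda_n$, $m\to\infty$: these boxes are exact unions of $(2m)^d$ translated copies of $\Lambda_n$, so the product structure of $\tilde{P}_n$ gives $I_{2m\Lambda_n}(\tilde{P}_n\vert\pi^z)=(2m)^d\,I_{\Lambda_n}(P_n\vert\pi^z)$ with no partial blocks at all. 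This yields the exact identity $\mathscr{I}(\bar{P}_n\vert\pi^z)=\tfrac{1}{\lvert\Lambda_n\rvert}I_{\Lambda_n}(P_n\vert\pi^z)$ rather than just an inequality. Your convexity-plus-monotonicity argument reaches the same upper bound, but the paper's shortcut is cleaner and removes precisely the obstacle you flagged.
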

\begin{proof}
  Since the map $Q\mapsto\mathscr{I}(Q\vert\pi^z)$ is affine, it holds
  \begin{equation*}
  \begin{split}
    \mathscr{I}(\bar{P}_n\vert\pi^z) &= \dfrac{1}{(2n)^d}\sum_{\kappa\in\IZ^d\cap\Lambda_n}\mathscr{I}(\tilde{P}_n\circ\theta_\kappa^{-1}\vert\pi^z) \\
    &= \mathscr{I}(\tilde{P}_n\vert\pi^z) =  \lim\limits_{m\rightarrow+\infty}\dfrac{1}{\lvert 2m\Lambda_n\rvert}I_{2m\Lambda_n}(\tilde{P}_n\vert\pi^z) \\
    &= \lim\limits_{m\rightarrow+\infty}\dfrac{1}{(2m)^d\lvert\Lambda_n\rvert}(2m)^d\ I_{\Lambda_n}(P_n\vert\pi^z) = \dfrac{1}{\lvert\Lambda_n\rvert}I_{\Lambda_n}(P_n\vert\pi^z).
  \end{split}
  \end{equation*}
  Using the stability of the energy functional, we find
  \begin{equation*}
  \begin{split}
  I_{\Lambda_n}(P_n\vert\pi^z) = -\int H(\gamma)P_n(d\gamma)-\log(Z_{\Lambda_n})\overset{\eqref{H:stab}}{\leq}\tg{c}\int\langle\gamma,1+\norm{m}^{d+\delta}\rangle P_n(d\gamma) + z\lvert\Lambda_n\rvert.
  \end{split}
  \end{equation*}
  From Lemma \ref{lemma:bounds}, we know that inequality \eqref{eq:entr0} holds.
  Defining $a_3 \defeq \tg{c}a_1+z$, we conclude that, uniformly in $n\geq1$, $\mathscr{I}(\bar{P}_n\vert\pi^z)\leq a_3$.
\end{proof}

From the above proposition we deduce that the sequence $(\bar{P}_n)_{n\geq 1}$ belongs to the  relatively compact set $\mathcal{P}(\M) _{\leq a_3} $. It then admits at least one converging subsequence which we will still denote by $(\bar{P}_n)_{n\geq 1}$ for simplicity. The limit measure, here denoted by $\bar{P}$, is stationary under the translations $(\theta_\kappa)_{\kappa\in\IZ^d}$. We will prove in what follows that $\bar{P}$ is the infinite-volume Gibbs measure we are looking for.

\subsection{Support of the infinite-volume limit measure}\label{section:support}

We now justify the introduction of a set of tempered configurations as the right support of each of the probability measures $\bar{P}_n$, $n\geq 1$, as well as of the constructed limit probability measure $\bar{P}$.
\begin{Proposition}\label{prop:supbarP}
	The measures $\bar{P}_n$, $n\geq 1$, and the limit measure $\bar{P}$ are all supported on the tempered configurations, i.e.
	\begin{equation*}
		\forall n\geq 1,\quad \bar{P}_n\big(\M^{\temp}\big) = \bar{P}\big(\M^{\temp}\big) = 1.
	\end{equation*}
\end{Proposition}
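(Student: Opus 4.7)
My plan is to combine the stationarity of each $\bar P_n$ (and of the limit $\bar P$) under the lattice translations $(\theta_\kappa)_{\kappa\in\IZ^d}$ with the uniform $L^1$ control on $\langle\gamma_{\Lambda_1}, f\rangle$, where $f(x,m) = 1 + \norm{m}^{d+\delta}$, and then invoke the multidimensional Birkhoff ergodic theorem to extract the sublinear growth that characterises tempered configurations.

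\textbf{Step 1 (uniform $L^1$ control).} For the prelimit sequence, inequality \eqref{eq:entrbarP} already gives $\int \langle\gamma_{\Lambda_1}, f\rangle\, \bar P_n(d\gamma) \leq 2^d a_2$ uniformly in $n$. To transfer this bound to $\bar P$, I note that the functional $\gamma \mapsto \langle\gamma_{\Lambda_1}, f\rangle$ is $\Lambda_1$-local and tame in the sense of Definition \ref{def:tame} (with $c=1$), so it belongs to $\mathcal L$. Since by construction $\bar P_n \to \bar P$ in $\tau_{\mathcal L}$, the same bound $\int \langle\gamma_{\Lambda_1}, f\rangle\, \bar P(d\gamma) \leq 2^d a_2$ is inherited.

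\textbf{Step 2 (ergodic theorem).} Both $\bar P_n$ and $\bar P$ are $(\theta_\kappa)$-stationary, and the random variable $\kappa \mapsto \langle\gamma_{\Lambda_1^\kappa}, f\rangle$ is integrable by Step 1. The multidimensional Birkhoff ergodic theorem applied to the covering $\Lambda_l = \bigsqcup_{\kappa\in\Lambda_l\cap\IZ^d}\Lambda_1^\kappa$ then yields, $Q$-a.s. for $Q\in\{\bar P_n,\bar P\}$,
\[
\frac{1}{(2l)^d}\langle\gamma_{\Lambda_l}, f\rangle \;=\; \frac{1}{(2l)^d}\sum_{\kappa\in\Lambda_l\cap\IZ^d}\langle\gamma_{\Lambda_1^\kappa}, f\rangle \;\xrightarrow[l\to\infty]{}\; Y(\gamma) < +\infty.
\]
In particular, $\sup_{l\geq 1}\frac{1}{l^d}\langle\gamma_{\Lambda_l}, f\rangle$ is $Q$-a.s. finite. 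Since $B(0,l)\subset\Lambda_l$, it follows that $T(\gamma)\defeq \sup_{l\geq 1}\frac{1}{l^d}\langle\gamma_{B(0,l)}, f\rangle$ is also a.s. finite. Choosing $\tg{t}(\gamma)\defeq \lceil T(\gamma)\rceil\in\IN$ gives $\gamma \in \M^{\tg{t}(\gamma)} \subset \M^{\temp}$, whence $Q(\M^{\temp})=1$.

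\textbf{Main obstacle.} The delicate point is the passage from $\bar P_n$ to $\bar P$: the set $\M^{\temp}$ is a countable union over $\tg{t}\in\IN$ of intersections over $l\in\IN^*$ of constraints on an unbounded functional, and is not closed in $\tau_{\mathcal L}$, so one cannot directly conclude $\bar P(\M^{\temp})=1$ from $\bar P_n(\M^{\temp})=1$ by a weak-convergence argument. My plan sidesteps this by passing only the $L^1$ bound through $\tau_{\mathcal L}$-convergence (which is legitimate thanks to tameness of $f$) and then re-running the ergodic argument intrinsically on $\bar P$. The mark assumption $(\mathcal H_m)$ is crucial throughout: it is what makes $f$ genuinely integrable uniformly in $n$ via Lemma \ref{lemma:bounds} and hence makes the ergodic theorem applicable.
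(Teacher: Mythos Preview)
Your proposal is correct and follows essentially the same route as the paper: establish the uniform $L^1$ bound on $\langle\gamma_{\Lambda_1},f\rangle$, pass it to $\bar P$ via $\tau_{\mathcal L}$-convergence (legitimate since the functional is tame and local), and then apply a spatial ergodic theorem to conclude that the averages $\tfrac{1}{l^d}\langle\gamma_{B(0,l)},f\rangle$ are a.s.\ bounded. The only differences are cosmetic---the paper invokes the Nguyen--Zessin ergodic theorem \cite{nguyen_zessin_1979} directly on balls rather than the lattice Birkhoff theorem on cubes, and your covering identity $\Lambda_l=\bigsqcup_{\kappa\in\Lambda_l\cap\IZ^d}\Lambda_1^\kappa$ mis-indexes the translates (with $\Lambda_1^\kappa=\Lambda_1+2\kappa$ only $l^d$ of them are needed), but neither point affects the argument.
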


\begin{proof}
  Let us show that, for $\bar{P}$ (resp. $\bar{P}_n$)-a.e. $\gamma\in\M$, there exists $\tg{t}=\tg{t}(\gamma)\geq1$ such that
  \begin{equation}\label{eq:temp3}
    \sup\limits_{l\in\IN^*}\dfrac{1}{l^d}\langle\gamma_{B(0,l)},1+\norm{m}^{d+\delta}\rangle\leq\tg{t}.
  \end{equation}
  From \eqref{eq:entrbarP}, we know that
  \begin{equation}\label{eq:supp}
    \forall n\geq1, \quad \int\langle\gamma_{[-1,1)^d},1+\norm{m}^{d+\delta}\rangle\bar{P}_n(d\gamma)\leq 2^d a_2.
  \end{equation}
  Since the integrand is a tame local function, the same inequality remains true when passing to the limit:
  \begin{equation*}
    \int\langle\gamma_{[-1,1)^d},1+\norm{m}^{d+\delta}\rangle\bar{P}(d\gamma)\leq 2^d a_2.
  \end{equation*}
  The integrability of $\langle\gamma_{[-1,1)^d},1+\norm{m}^{d+\delta}\rangle$ under $\bar{P}$ (resp. $\bar{P}_n$) is precisely what we need in order to apply the ergodic theorem in \cite{nguyen_zessin_1979}. Doing so yields the following spatial asymptotics, where we have $\bar{P}$ (resp. $\bar{P}_n$)-a.s. convergence to the conditional expectation under $\bar{P}$ (resp. $\bar{P}_n$) with respect to the $\sigma$-field $\mathcal{J}$ of $(\theta_\kappa)_{\kappa\in\IZ^d}$- invariant sets:
  \begin{equation*}
    \lim\limits_{l\rightarrow+\infty}\dfrac{1}{\lvert B(0,l)\rvert}\langle\gamma_{B(0,l)},1+\norm{m}^{d+\delta}\rangle
= \frac{1}{2^d}\IE_{\bar{P}}\Big[\langle\gamma_{[-1,1)^d},1+\norm{m}^{d+\delta}\rangle\ \vert\ \mathcal{J}\Big]
  \end{equation*}
  (resp. $\IE_{\bar{P}_n}$). This implies that, $\bar{P}$ (resp. $\bar{P}_n$)-a.s.,
  \begin{equation*}
    \lim\limits_{l\rightarrow+\infty}\dfrac{1}{\lvert B(0,l)\rvert}\langle\gamma_{B(0,l)},1+\norm{m}^{d+\delta}\rangle <+\infty
  \end{equation*}
  so that, a fortiori, \eqref{eq:temp3} holds, and the proposition is proved.
\end{proof}

In Subsection \ref{section:gibbs}, in order to prove Gibbsianity of the limit measure, we need more: a uniform estimate of the support of the measures $\bar{P}_n$, $n\geq 1$. For this reason, we introduce the increasing family $(\underline{\M}^l)_{l\in\IN^*}$ of subsets of $\M^\temp$, defined by
\begin{equation*}
	\underline{\M}^l\defeq \big\{\gamma\in\M^\temp:\, \forall k\in\IN^*,\, k \geq l,\, \forall (x,m)\in\gamma_{B(0,2k+1)^c},\  B(x,\norm{m})\cap B(0,k)=\emptyset \big\}.
\end{equation*}
Notice that, thanks to Lemma \ref{lemma:range}, for any $\tg{t}\geq 1$, $\M^\tg{t}\subset\underline{\M}^{\tg{l}(\tg{t})}$ (see Figure \ref{fig:circles}).

\begin{Proposition}\label{prop:supbarPn}
For any $\epsilon>0$, there exists $l\geq 1$ such that
	\begin{equation*}
		\forall n\geq 1,\quad \bar{P}_n(\underline{\M}^l)\geq 1-\epsilon.
	\end{equation*}
\end{Proposition}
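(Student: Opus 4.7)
The plan is to control $\bar{P}_n(\M\setminus\underline{\M}^l)$ uniformly in $n$ by a quantity that vanishes with $l$, and then pick $l$ large enough for the target $\epsilon$. The starting observation is a geometric rephrasing of the definition: $\gamma\notin\underline{\M}^l$ means that there exist $k\geq l$ and $(x,m)\in\gamma$ with $|x|\geq 2k+1$ and $\norm{m}>|x|-k$. Since $k\leq (|x|-1)/2$ forces $|x|-k\geq (|x|+1)/2$, such a point automatically satisfies $|x|\geq 2l+1$ and $2\norm{m}>|x|$. Therefore
\begin{equation*}
\{\gamma\notin\underline{\M}^l\}\;\subset\;\big\{\exists\,(x,m)\in\gamma:\ |x|\geq 2l+1,\ 2\norm{m}>|x|\big\}.
\end{equation*}

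Next I would partition $\{|x|\geq 2l+1\}$ into dyadic annuli $A_k\defeq\{x\in\IR^d:\ 2^k(2l+1)\leq|x|<2^{k+1}(2l+1)\}$ for $k\geq 0$. Any point in $A_k$ witnessing the above event must have $\norm{m}\geq 2^{k-1}(2l+1)$, so Markov's inequality yields
\begin{equation*}
\bar{P}_n\!\left(\exists\,(x,m)\in\gamma_{A_k}:\ \norm{m}>2^{k-1}(2l+1)\right)\;\leq\;\frac{\IE_{\bar{P}_n}\bigl[\langle\gamma_{A_k},\norm{m}^{d+\delta}\rangle\bigr]}{\bigl(2^{k-1}(2l+1)\bigr)^{d+\delta}}.
\end{equation*}
To bound the numerator, cover $A_k$ by the $O\bigl((2^k(2l+1))^d\bigr)$ lattice translates of $\Lambda_1$ that meet it, and invoke the $\IZ^d$-stationarity of $\bar{P}_n$ together with the moment estimate \eqref{eq:entrbarP}; this produces $\IE_{\bar{P}_n}[\langle\gamma_{A_k},\norm{m}^{d+\delta}\rangle]\leq C(2^k(2l+1))^d$ for a constant $C$ depending only on $d$ and $a_2$. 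Combining and summing over $k$,
\begin{equation*}
\bar{P}_n(\gamma\notin\underline{\M}^l)\;\leq\;C'(2l+1)^{-\delta}\sum_{k\geq 0}2^{-k\delta}\;=\;C''(2l+1)^{-\delta},
\end{equation*}
uniformly in $n$, the geometric series being convergent thanks to $\delta>0$. Picking $l$ so that $C''(2l+1)^{-\delta}\leq\epsilon$ then finishes the proof.

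The crucial step is really the first one: once one notices that failing to lie in $\underline{\M}^l$ forces a far-away point to carry a mark of size comparable to its distance to the origin, the remainder is a routine dyadic-shell bookkeeping in which the extra exponent $\delta$ supplied by \eqref{eq:entrbarP} precisely offsets the polynomial volume growth of the annuli, producing a polynomial-in-$l$ decay. All constants depend only on $d$, $\delta$ and $a_2$, so the bound is uniform in $n$, which is exactly what Proposition \ref{prop:supbarPn} asks for.
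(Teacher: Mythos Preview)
Your argument is correct, but it follows a genuinely different route from the paper's.

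The paper partitions space into the unit cubes $D_\kappa$, $\kappa\in\IZ^d$, lists them along a spiral so that $|\kappa_i|^d\asymp i$, uses stationarity to reduce to the fixed cube $D_0$, and then bounds the tail sum $\sum_{i\geq N}\bar P_n(F(\gamma)\geq i)$ by $\IE_{\bar P_n}[\1_{\{F\geq N\}}F]$ with $F(\gamma)=\tfrac{2^d}{a}\langle\gamma_{D_0},1+\norm{m}^d\rangle$; the vanishing of the latter as $N\to\infty$, uniformly over the entropy level set, is obtained from the Georgii--Zessin equi-integrability Lemma~\ref{lemma:EI}. In particular the paper only uses the $d$-th moment of the marks and appeals to an external lemma.

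You instead decompose into dyadic annuli and apply Markov's inequality directly with the full $(d+\delta)$-moment supplied by \eqref{eq:entrbarP}. This is more elementary---no call to Lemma~\ref{lemma:EI} is needed---and yields the explicit quantitative rate $\bar P_n(\M\setminus\underline{\M}^l)\leq C''(2l+1)^{-\delta}$, which the paper's argument does not provide. Conversely, the paper's route would still work if one only had a uniform $d$-th-moment bound together with equi-integrability on entropy level sets, whereas your argument really uses the extra $\delta$ in the exponent.

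One small remark: strictly speaking $\underline{\M}^l$ is defined as a subset of $\M^{\temp}$, so $\{\gamma\notin\underline{\M}^l\}$ also contains the non-tempered configurations, and your first inclusion should be read modulo a $\bar P_n$-null set; this is harmless thanks to Proposition~\ref{prop:supbarP}, and the paper's own rephrasing \eqref{eq:estunderM} makes the same tacit identification.
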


\begin{proof}
	We want to find $l\geq 1$ such that 
	\begin{equation}\label{eq:estunderM}
		\bar{P}_n\Big(\sup_{k\geq l}\sup_{(x,m)\in\gamma_{B(0,2k+1)^c}}\frac{\norm{m}}{\abs{x}}\geq \frac{1}{2}\Big)\leq \epsilon.
	\end{equation}
For any $\kappa=(\kappa^1,\dots,\kappa^d)\in\IZ^d$, let $D_\kappa=[\kappa^1,\kappa^1+1)\times\dots\times[\kappa^d,\kappa^d+1)\subset\IR^d$. We list all the elements of $\IZ^d$ by a sequence $(\kappa_i)_{i\in\IN}\subset\IZ^d$ that forms a spiral, starting at $0$; in particular, there exist constants $a,b>0$ (depending on the dimension $d$), such that $i\,a\leq \abs{\kappa_i}^{d}\leq i\,b$. We can then compute, for any $l\geq 1$,
	\begin{equation*}
	\begin{split}
		&\sum_{\substack{\kappa\in\IZ^d: \\ \abs{\kappa}\geq 2l}}\bar{P}_n\big(\tg{m}(\gamma_{D_\kappa})\geq \tfrac{1}{2}\abs{\kappa}\big)  = \sum_{\substack{i\geq 1: \\ \abs{\kappa_i}\geq 2l}} \bar{P}_n\big(\tg{m}(\gamma_{D_{\kappa_i}})\geq \tfrac{1}{2}\abs{\kappa_i}\big)\\
		&\leq \sum_{i\geq (2l)^d/b} \bar{P}_n\big(\tg{m}(\gamma_{D_{\kappa_i}})\geq \tfrac{1}{2}\abs{\kappa_i}\big) \leq \sum_{i\geq (2l)^d/b} \bar{P}_n\big(\tg{m}(\gamma_{D_{\kappa_i}})^d\geq \frac{a}{2^d}i\big)\\
		&\leq \sum_{i\geq (2l)^d/b} \bar{P}_n\Big(\underbrace{\frac{2^d}{a}\hspace{-.3em}\sum_{(x,m)\in\gamma_{D_{0}}}(1 + \norm{m}^d)}_{\eqdef\ F(\gamma)} \geq i\Big)\\
		&\leq \IE_{\bar{P}_n}[\1_{\{F(\gamma)\geq (2l)^d/b\}}F(\gamma)]\\
		&= \frac{2^d}{a}\IE_{\bar{P}_n}\Big[\1_{\{\sum_{(x,m)\in\gamma_{D_0}}(1+\norm{m}^d)\geq \frac{a}{b} l^d\}} \sum_{(x,m)\in\gamma_{D_0}}(1+\norm{m}^d)\Big].
	\end{split}
	\end{equation*}
	To control this expression, recall the following result (due to H.-O. Georgii and H. Zessin), which proves that point configurations in $\IR^d$ with marks in a complete, separable metric space, satisfy a local equi-integrability property on entropy level sets, with respect to the marks:
	\begin{Lemma}[\normalfont{\cite{georgii_zessin_1993}, Lemma 5.2}]\label{lemma:EI}
	\emph{For any measurable non-negative function \mbox{$f:\mathcal{S}\rightarrow\IR_+$} and for every $a>0$ and $\Delta\in\mathcal{B}_b(\IR^d)$,}
  	\begin{equation*}
      	\lim\limits_{N\rightarrow\infty}\sup\limits_{P\in\mathcal{P}(\M)_{\leq a}}\IE_{P}\Big[\1_{\{\langle\gamma_\Delta,f\rangle\geq N\}}\langle\gamma_\Delta,f\rangle\Big] = 0.
    \end{equation*}
   \end{Lemma} 
   	Applying this result to the sequence $(\bar{P}_n)_n$, with $f(x,m) = 1+\norm{m}^{d}$ and \mbox{$\Delta=D_0$}, we have that, for any $\epsilon>0$, there exists $l\geq 1$ large enough, such that
	\begin{equation*}
		\forall n\geq 1,\quad \bar{P}_n\Big( \sup_{\kappa\in\IZ^d,\ \abs{\kappa}\geq 2l}\frac{1}{\abs{\kappa}}\tg{m}(\gamma_{D_\kappa})\geq \frac{1}{2}\Big)\leq \epsilon.
	\end{equation*}
	For any $(x,m)\in\gamma_{B(0,2k+1)^c}$, with $k\geq l$, there exists $\kappa\in\IZ^d$ with $\abs{\kappa}\geq 2k$, such that $(x,m)\in\gamma_{D_\kappa}$; since then $\frac{\norm{m}}{\abs{x}}\leq \frac{\norm{m}}{\abs{\kappa}}$, we find that \eqref{eq:estunderM} holds, and the claim follows.
\end{proof}

\begin{Remark}
	One could have thought that such a uniform estimate held in $\M^\tg{t}$, for some $\tg{t}\geq 1$, but this is not the case; we thank one of the referees for pointing how this would not work. In order to have the uniform estimate, we had then to enlarge the set of tempered configurations by introducing $\underline{\M}^l$ instead.
\end{Remark}

\subsection{The limit measure is Gibbsian}\label{section:gibbs}

We are now ready to prove that the infinite-volume $\bar{P}$ measure we have constructed satisfies the Gibbsian property.

  \begin{Lemma}\label{lemma:kernel}
  Consider the Gibbsian kernel $\Xi_\Lambda$ defined by \eqref{def:XiLambda}. It satisfies:
  \begin{enumerate}[label=(\roman*)]
    \item For any $\xi\in\M^{\temp}$, $\Xi_\Lambda(\xi,d\gamma)$ is well defined: $Z_\Lambda(\xi)<+\infty$;
    \item For any $\Lambda$-local tame functional $F$ on $\M$, the map $\xi\mapsto\int_{\M_\Lambda} F(\gamma) \Xi_\Lambda(\xi,d\gamma)$ defined on $\M^{\temp}$ is measurable.
    \item The family $(\Xi_\Lambda)_{\Lambda\in\mathcal{B}_b(\IR^d)}$ satisfies a \emph{finite-volume compatibility condition}, in the sense that, for any ordered finite-volumes $\Lambda\subset\Delta$,
  \begin{equation}\label{eq:compatibility}
    \int_{\M_{\Delta\setminus\Lambda}} \Xi_\Lambda(\zeta_{\Delta\setminus \Lambda}\xi_{\Delta^c}, d\gamma_{\Lambda})\,  \Xi_\Delta(\xi_{\Delta^c}, d\zeta_{\Delta\setminus \Lambda})
= \Xi_\Delta (\xi_{\Delta^c}, d(\gamma_{\Lambda}\zeta_{\Delta\setminus \Lambda})).
  \end{equation}
  \end{enumerate}
\end{Lemma}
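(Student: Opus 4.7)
The plan is to treat the three items in order, using $(\mathcal{H}_{loc.st})$ to obtain the domination needed for (i) and (ii), and reserving the bulk of the work for the compatibility identity (iii), which is the main obstacle.

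For (i), I would fix $\xi\in\M^\temp$ and choose $\tg{t}\geq 1$ with $\xi\in\M^{\tg{t}}$. The local stability bound dominates
\[
Z_\Lambda(\xi)\leq\int_{\M_\Lambda}e^{\tg{c}'\langle\gamma_\Lambda,1+\norm{m}^{d+\delta}\rangle}\,\pi^z_\Lambda(d\gamma),
\]
which is finite by the same explicit Poisson computation used in Lemma \ref{lemma:partfunctionfinite}, where $(\mathcal{H}_m)$ enters. Positivity of $Z_\Lambda(\xi)$ follows by restricting to the empty configuration: $(\mathcal{H}_r)$ ensures that $H_\Lambda(\xi_{\Lambda^c})$ is finite (it is a difference of two $H$-values on finite configurations), so $Z_\Lambda(\xi)\geq e^{-H_\Lambda(\xi_{\Lambda^c})-z\abs{\Lambda}}>0$.

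For (ii), I would first note that $(\gamma_\Lambda,\xi)\mapsto H_\Lambda(\gamma_\Lambda\xi_{\Lambda^c})$ is jointly measurable as a pointwise limit of the measurable maps $H(\cdot_{\Lambda_n})-H(\cdot_{\Lambda_n\setminus\Lambda})$ appearing in \eqref{eq:locenergy}. Once this is in place, the dominating bound from (i) together with the tameness of $F$ makes the integrand $\pi^z_\Lambda$-integrable on each $\M^{\tg{t}}$, so Fubini's theorem delivers measurability of $\xi\mapsto\int F\,e^{-H_\Lambda(\gamma_\Lambda\xi_{\Lambda^c})}\pi^z_\Lambda(d\gamma)$ and of $Z_\Lambda(\xi)$ on each level set $\M^{\tg{t}}$; taking the countable union over $\tg{t}\in\IN$ extends the conclusion to $\M^\temp$.

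Part (iii) is where the real work lies. The key is the additivity identity \eqref{eq:additivity}: for $\Lambda\subset\Delta$,
\[
H_\Delta(\gamma_\Lambda\zeta_{\Delta\setminus\Lambda}\xi_{\Delta^c})=H_\Lambda(\gamma_\Lambda\zeta_{\Delta\setminus\Lambda}\xi_{\Delta^c})+\phi_{\Lambda,\Delta}(\zeta_{\Delta\setminus\Lambda}\xi_{\Delta^c}),
\]
which factors $e^{-H_\Delta}$ into a $\gamma_\Lambda$-dependent piece and a piece depending only on $\zeta_{\Delta\setminus\Lambda}\xi_{\Delta^c}$. Combining this with $\pi^z_\Delta=\pi^z_\Lambda\otimes\pi^z_{\Delta\setminus\Lambda}$ and integrating $\gamma_\Lambda$ out of $\Xi_\Delta(\xi_{\Delta^c},\cdot)$ identifies the $(\Delta\setminus\Lambda)$-marginal as
\[
\frac{Z_\Lambda(\zeta_{\Delta\setminus\Lambda}\xi_{\Delta^c})\,e^{-\phi_{\Lambda,\Delta}(\zeta_{\Delta\setminus\Lambda}\xi_{\Delta^c})}}{Z_\Delta(\xi_{\Delta^c})}\,\pi^z_{\Delta\setminus\Lambda}(d\zeta_{\Delta\setminus\Lambda}),
\]
while the residual $\gamma_\Lambda$-factor is precisely $\Xi_\Lambda(\zeta_{\Delta\setminus\Lambda}\xi_{\Delta^c},d\gamma_\Lambda)$; multiplying these two reproduces \eqref{eq:compatibility}. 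The subtlety I expect to justify explicitly is that, in order for the inner kernel $\Xi_\Lambda(\zeta_{\Delta\setminus\Lambda}\xi_{\Delta^c},\cdot)$ to be well defined by part (i), one needs $\zeta_{\Delta\setminus\Lambda}\xi_{\Delta^c}\in\M^\temp$; this holds because appending a finite configuration $\zeta_{\Delta\setminus\Lambda}$ to a tempered $\xi_{\Delta^c}\in\M^{\tg{t}}$ yields an element of $\M^{\tg{t}'}$ for $\tg{t}'$ large enough in terms of $\tg{t}$ and $\langle\zeta_{\Delta\setminus\Lambda},1+\norm{m}^{d+\delta}\rangle$.
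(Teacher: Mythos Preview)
Your proposal is correct and follows essentially the same approach as the paper: $(\mathcal{H}_{loc.st})$ for (i), measurability of $H_\Lambda$ and $Z_\Lambda(\cdot)$ for (ii), and the additivity \eqref{eq:additivity} for (iii). The paper's proof is in fact much terser than yours---it simply points to Lemma~\ref{lemma:partfunctionfinite}, to measurability of the conditional energy, and to the additivity lemma (citing Preston)---so your version supplies strictly more detail, including the temperedness of $\zeta_{\Delta\setminus\Lambda}\xi_{\Delta^c}$, which the paper leaves implicit.
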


\begin{proof}
	\begin{enumerate}[label=\emph{(\roman*)}]
		\item We have to show that, for any $\xi\in\M^{\temp},\ 0<Z_\Lambda(\xi)<+\infty$. Lemma \ref{lemma:partfunctionfinite} dealt with the free boundary condition case, so this followed from the stability assumption \eqref{H:stab}. Since $H_\Lambda(\gamma_\Lambda\xi_{\Lambda^c})\neq H(\gamma_\Lambda)$, this now follows in the same way from \eqref{H:locstab}.
		\item 	The measurability of the map $\xi\mapsto\int_{\M_\Lambda} F(\gamma) \Xi_\Lambda(\xi,d\gamma)$ follows from the measurability of $\xi\mapsto H(\gamma_\Lambda\xi_{\Lambda^c})$ and $\xi\mapsto Z_\Lambda(\xi)$.
		\item The compatibility of the family $(\Xi_\Lambda)_\Lambda$ follows, as in \cite{preston_1976}, from the additivity \eqref{eq:additivity} of the conditional energy functional.
	\end{enumerate}
\end{proof}
We now state the main result of this subsection:
\begin{Proposition}\label{prop:Gibbs}
	The probability measure $\bar{P}$ is an infinite-volume Gibbs measure with energy functional $H$.
\end{Proposition}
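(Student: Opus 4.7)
The aim is to verify that, for every $\Lambda\in\mathcal{B}_b(\IR^d)$ and every bounded $\Lambda$-local tame functional $F$,
\begin{equation*}
\int F\,d\bar{P} \;=\; \int_{\M}\int_{\M_\Lambda} F(\gamma_\Lambda\xi_{\Lambda^c})\,\Xi_\Lambda(\xi,d\gamma)\,\bar{P}(d\xi).
\end{equation*}
The sequence $\bar{P}_n$ does not itself satisfy this identity, since whenever $\Lambda$ sits close to the boundary between two of the disjoint blocks $\Lambda_n^{\kappa}$ used in the construction of $\tilde{P}_n$, the conditional law of $\gamma_\Lambda$ given the outside configuration is not a Gibbsian kernel. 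My plan is therefore, first, to replace $\bar{P}_n$ by an asymptotically equivalent sequence $\hat{P}_n$ that satisfies an \emph{exact} Gibbs-type identity for a spatially truncated energy, and second, to transfer this identity to $\bar{P}$ by combining the $\tau_{\mathcal{L}}$-convergence $\hat{P}_n\to\bar{P}$ with the range control of $(\mathcal{H}_{r})$ and the uniform tempering of Proposition~\ref{prop:supbarPn}.

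For the first step, fix an auxiliary radius $r\geq 1$ and let $\mathcal{K}_n(r)\subset\Lambda_n\cap\IZ^d$ be the set of lattice translations $\kappa$ such that $\Lambda+\kappa$ lies in the interior of some block $\Lambda_n^{\kappa'}$ at Euclidean distance at least $r$ from its boundary. Define
\begin{equation*}
\hat{P}_n \;\defeq\; \frac{1}{\#\mathcal{K}_n(r)}\sum_{\kappa\in\mathcal{K}_n(r)} \tilde{P}_n\circ\theta_\kappa^{-1}.
\end{equation*}
Since $\#\mathcal{K}_n(r)/\#(\Lambda_n\cap\IZ^d)\to 1$ as $n\to\infty$ for $r$ fixed, and tame local integrands are equi-integrable under $\bar{P}_n$ by~\eqref{eq:entrbarP}, one has $\int G\,d\hat{P}_n-\int G\,d\bar{P}_n\to 0$ for every $G\in\mathcal{L}$, so $\hat{P}_n\to\bar{P}$ in $\tau_{\mathcal{L}}$ along the same subsequence. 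By the product structure of $\tilde{P}_n$, for each $\kappa\in\mathcal{K}_n(r)$ the conditional distribution of $\gamma_\Lambda$ under $\tilde{P}_n\circ\theta_\kappa^{-1}$ given $\gamma_{\Lambda^c}$ is the Gibbs kernel $\Xi_\Lambda^{(r)}(\xi,d\gamma)$ built from the locally truncated energy
\begin{equation*}
H_\Lambda^{(r)}(\gamma_\Lambda\xi_{\Lambda^c}) \;\defeq\; H\big(\gamma_\Lambda\xi_{(\Lambda\oplus B(0,r))\setminus\Lambda}\big)-H\big(\xi_{(\Lambda\oplus B(0,r))\setminus\Lambda}\big),
\end{equation*}
with free boundary outside $\Lambda\oplus B(0,r)$. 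Averaging over $\mathcal{K}_n(r)$ yields the exact identity
\begin{equation*}
\int F\,d\hat{P}_n \;=\; \int_{\M}\int_{\M_\Lambda} F(\gamma_\Lambda\xi_{\Lambda^c})\,\Xi_\Lambda^{(r)}(\xi,d\gamma)\,\hat{P}_n(d\xi),
\end{equation*}
whose $\xi$-integrand is a $(\Lambda\oplus B(0,r))$-local functional, tame thanks to $(\mathcal{H}_{loc.st})$ and $(\mathcal{H}_{m})$, hence a member of $\mathcal{L}$.

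For the second step, the key observation is that on the event $\underline{\M}^l$ of Proposition~\ref{prop:supbarPn}, Lemma~\ref{lemma:range} and Assumption $(\mathcal{H}_{r})$ imply that, as soon as $\Lambda\subset B(0,l)$ and $r$ is taken larger than $2\tg{l}(\tg{t})+2\tg{m}(\gamma_\Lambda)+1$, one has $H_\Lambda^{(r)}=H_\Lambda$ and hence $\Xi_\Lambda^{(r)}(\xi,\cdot)=\Xi_\Lambda(\xi,\cdot)$; the mark $\tg{m}(\gamma_\Lambda)$ is kept under control thanks to the super-exponential moment $(\mathcal{H}_{m})$, so that restricting to $\{\tg{m}(\gamma_\Lambda)\leq r\}$ produces an error that vanishes as $r\to\infty$. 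Splitting both sides of the exact identity above into the contributions from $\{\xi\in\underline{\M}^l\}$ and its complement, the former is handled by $\tau_{\mathcal{L}}$-continuity applied to the local tame $\xi$-integrand constructed above, while the latter is bounded uniformly in $n$ by Proposition~\ref{prop:supbarPn} (which gives $\hat{P}_n(\underline{\M}^l)\geq 1-\epsilon$ uniformly in $n$) combined with Lemma~\ref{lemma:EI} applied to the entropy level set $\mathcal{P}(\M)_{\leq a_3}$, so as to equi-integrate the mass-type term bounding $F$. Sending first $n\to\infty$, then $r\to\infty$, and finally $l\to\infty$ yields \ref{DLR} for $\bar{P}$. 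The main obstacle is precisely this interchange of limits: the non-locality of $\Xi_\Lambda$ forces the truncation, the random range of $H_\Lambda$ couples the scales $l$ and $r$, and uniform control across $n$ is only made possible by combining the entropy bound of Proposition~\ref{prop:entropy}, the uniform tempering of Proposition~\ref{prop:supbarPn}, and the equi-integrability of Lemma~\ref{lemma:EI}.
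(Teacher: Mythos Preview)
Your overall strategy---replace $\bar P_n$ by an asymptotically equivalent sequence satisfying a DLR-type identity, localise the kernel, pass to the limit using $\tau_{\mathcal L}$-convergence, and control the residual errors with Proposition~\ref{prop:supbarPn} and Lemma~\ref{lemma:EI}---matches the paper's. But your implementation of the first step contains a genuine gap.

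You claim that for $\kappa\in\mathcal K_n(r)$ the conditional distribution of $\gamma_\Lambda$ under $\tilde P_n\circ\theta_\kappa^{-1}$ given $\gamma_{\Lambda^c}$ is the truncated kernel $\Xi_\Lambda^{(r)}(\xi,\cdot)$. This is not correct. Under $\tilde P_n\circ\theta_\kappa^{-1}$ the translated box $\Lambda$ sits inside a single block $B$ of side $2n$; by independence of the blocks and by the compatibility~\eqref{eq:compatibility} of the kernels, the conditional law of $\gamma_\Lambda$ is $\Xi_\Lambda(\xi_{B\setminus\Lambda},\cdot)$, i.e.\ the \emph{full} kernel with boundary condition equal to $\xi$ restricted to the whole block $B$, not just to $\Lambda\oplus B(0,r)$. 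Since the range of $H_\Lambda$ is random and depends on $\tg m(\gamma_\Lambda)$ (see $(\mathcal H_r)$), points of $\xi$ in $B\setminus(\Lambda\oplus B(0,r))$ may very well interact with $\gamma_\Lambda$, and so $\Xi_\Lambda(\xi_{B\setminus\Lambda},\cdot)\neq\Xi_\Lambda^{(r)}(\xi,\cdot)$ in general. Your ``exact identity'' therefore fails, and with it the claim that the $\xi$-integrand in the identity is $(\Lambda\oplus B(0,r))$-local. Note also that $\Xi_\Lambda(\xi_{B\setminus\Lambda},\cdot)\neq\Xi_\Lambda(\xi,\cdot)$ either, since $\xi$ under $\tilde P_n$ has points in the other blocks; so you cannot simply swap $\Xi_\Lambda^{(r)}$ for $\Xi_\Lambda$ in your identity.

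The paper avoids this by building $\hat P_n$ from translates of $P_n$ itself (not of the periodised $\tilde P_n$): under $P_n\circ\theta_\kappa^{-1}$ the configuration is supported in a single translated box, hence $\xi_{\Lambda^c}=\xi_{\theta_\kappa(\Lambda_n)\setminus\Lambda}$, and compatibility gives \ref{DLR} with the \emph{full} kernel $\Xi_\Lambda$ exactly. Only afterwards does the paper introduce a local $(\Delta,m_0)$-cut-off $\Xi_\Lambda^{\Delta,m_0}$, proving that it approximates $\Xi_\Lambda$ uniformly on each $\mathcal M^{\tg t}$ (this is where $(\mathcal H_r)$ and $(\mathcal H_{loc.st})$ enter); this approximation step absorbs precisely the error you tried to hide inside an exact identity. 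Your argument can be repaired either by switching to $P_n$ in the definition of $\hat P_n$, or by acknowledging that your identity holds only up to an error governed by the event $\{\tg m(\gamma_\Lambda)>m_0\}\cup\{\xi\notin\underline{\mathcal M}^l\}$ and controlling that error as the paper does.
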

\begin{proof}
	Since $\bar{P}$ is concentrated on the tempered configurations, we have to check that, for any finite-volume $\Lambda$, the following DLR equation is satisfied under $\bar P$:
	\begin{equation*}
  		\int_{\M^{\temp}} F(\gamma)\ \bar{P}(d\gamma) =
\int_{\M^{\temp}}\int_{\M_\Lambda}F(\gamma)\Xi_\Lambda(\xi,d\gamma)\ \bar{P}(d\xi),
	\end{equation*}
	where $F$ is a measurable, bounded and $\Lambda$-local functional.

	Fix $\Lambda\in\mathcal{B}_b(\IR^d)$. We would like to use the fact that its finite-volume approximations $(\bar P_n)_n$ satisfy \ref{DLR}; but since they are lattice-stationary and periodic, this is not true.
	To overcome this difficulty, we use some approximation techniques, articulated in the following three steps:
	\begin{enumerate}[label=\textbf{\roman*.},leftmargin=1.4em]
		\item \textbf{An equivalent sequence:} We introduce a new sequence $(\hat P_n)_n$ and show it is asymptotically equivalent to $(\bar P_n)_n$
		\item \textbf{A cut-off kernel:} We introduce a cut off of the Gibbsian kernel by a local functional
		\item \textbf{Gibbsianity of the limit measure:} We use estimations via the cut-off kernel to prove that $\bar P$ satisfies \ref{DLR}.
	\end{enumerate}
	\medbreak
	
	\begin{enumerate}[label=\textbf{\roman*.},leftmargin=1.4em]
	\item \textbf{An equivalent sequence:} We introduce a modified sequence of measures $(\hat{P}_n)_n$ satisfying \ref{DLR} and having the same asymptotic behaviour as $(\bar{P}_n)_n$: for every $n\geq1$, consider
	\begin{equation*}
  		\hat{P}_n = \dfrac{1}{|\Lambda_n|}\sum_{\substack{\kappa\in \Lambda_n\cap\IZ^d: \\ \theta_\kappa(\Lambda_n)\supset\Lambda}}P_n\circ\theta_\kappa^{-1}.
	\end{equation*}
	Since the above sum is not taken over all $\kappa\in\Lambda_n\cap\IZ^d$, $\hat{P}_n$ is not a probability measure. Moreover, $\hat{P}_n$ is bounded from above by $\bar{P}_n$, in the sense that, for any measurable $A\subset\M$, \mbox{$\hat{P}_n(A)\leq\bar{P}_n(A)$}.
	
	We introduce the index $i_0\in\IN$ as the smallest $n\geq1$ such that $\Lambda$ is contained in the box $\Lambda_{n}$. Using the compatibility of the kernels \eqref{eq:compatibility}, since $\Lambda\subset\Lambda_n$. For every $n\geq i_0$, the measure $\hat{P}_n$ satisfies \ref{DLR}.
	
	The sequences $(\hat{P}_n)_n$ and $(\bar{P}_n)_n$ are locally asymptotically equivalent, in the sense that, for every tame $\Lambda$-local functional $G$ in $\mathcal{L}$,
  	\begin{equation*}
    	\lim\limits_{n\rightarrow\infty}\left\lvert\int G(\gamma)\hat{P}_n(d\gamma)-\int G(\gamma)\bar{P}_n(d\gamma)\right\rvert = 0.
  	\end{equation*}
  	In particular, asymptotically $\hat{P}_n$ is a probability measure, i.e. for any $\epsilon>0$, we can find $n_0$ such that
  	\begin{equation}\label{eq:asymptproba}
   		\forall n\geq n_0,\quad \hat{P}_n(\M) \geq 1-\epsilon.
	\end{equation}
   	Indeed: let $G$ be a tame $\Lambda$-local  functional in $\mathcal{L}$ as in Definition \ref{def:tame}, and set
	\begin{equation*}
	\begin{split}
   		\delta_1 &\defeq \bigg\lvert \int_{\M^{\temp}} G(\gamma)\ \hat{P}_n(d\gamma) - \int_{\M^{\temp}} G(\gamma)\ \bar{P}_n(d\gamma)\bigg\rvert\\
		&=\bigg\lvert \dfrac{1}{(2n)^d} \sum_{\substack{\kappa\in \Lambda_n\cap\IZ^d: \\ \theta_\kappa(\Lambda_n)\supset\Lambda}} \int G(\gamma)P_n\circ\theta_\kappa^{-1}(d\gamma) \\
    &\phantom{= \Big\lvert} - \dfrac{1}{(2n)^d}\sum_{\kappa\in\Lambda_n\cap\IZ^d} \int G(\gamma)\tilde{P}_n\circ\theta_\kappa^{-1}(d\gamma)\Big\rvert
	\end{split}
   	\end{equation*}
   	We then have
   	\begin{equation*}
   	\begin{split}
   		\delta_1 &\leq \dfrac{1}{(2n)^d}\sum_{\substack{\kappa\in\Lambda_n\cap\IZ^d: \\ \theta_\kappa(\Lambda_n)\not\supseteq\Lambda}} \Big\lvert \int G(\gamma)\tilde{P}_n\circ\theta_\kappa^{-1}(d\gamma)\Big\rvert \\
    &\overset{\mathclap{\emph{(loc.+tame)}}}{\leq}\quad \dfrac{c}{(2n)^d}\sum_{\substack{\kappa\in\Lambda_n\cap\IZ^d: \\ \theta_\kappa(\Lambda_n)\not\supseteq\Lambda}} \int \Big(1+\langle\gamma_{\Lambda},1+\norm{m}^{d+\delta}\rangle\Big)\tilde{P}_n\circ\theta_\kappa^{-1}(d\gamma).
	\end{split}
   	\end{equation*}

		As $\Lambda\subset\Lambda_{i_0}$, the number of $\kappa\in\Lambda_n\cap\IZ^d$ such that $\theta_\kappa(\Lambda_n)\not\supseteq\Lambda$ is
  \begin{equation*}
    \text{Card}\big\{\kappa\in\Lambda_n\cap\IZ^d : \theta_\kappa (\Lambda_n)\not\supseteq\Lambda\big\}\leq i_0 2d(2n-1)^{d-1},
  \end{equation*}
  since: for $\Lambda$ to be moved out of $\Lambda_n$, one of the components of $\kappa$ should be larger than $n-i_0$ ($i_0$ options for this); there are $2d$ directions $\Lambda$ can be moved \emph{through} $\Lambda_n$; and the other $d-1$ components of $\kappa\in\Lambda_n\cap\IZ^d$ are left free ($2n-1$ options).

Calling $c'\defeq i_0dc$, we find
  	\begin{equation*}
  	\begin{split}
   		\delta_1 \leq \dfrac{c'}{n} + \dfrac{c}{(2n)^d}\sum_{\substack{\kappa\in\Lambda_n\cap\IZ^d: \\ \theta_\kappa(\Lambda_n)\not\supseteq\Lambda}} \int \langle\gamma_{\Lambda},1+\norm{m}^{d+\delta}\rangle \tilde{P}_n\circ\theta_\kappa^{-1}(d\gamma).
  \end{split}
  \end{equation*}
  Now, for any $a_4>0$ (which will be fixed later), we split the above integral over the set $\big\{\sum_{(x,m)\in\gamma_\Lambda}(1+\norm{m}^{d+\delta})\geq a_4\big\}$ and its complement. We obtain
  \begin{equation*}
  \begin{split}
    \delta_1 &\leq \dfrac{c'}{n} + \dfrac{a_4c'}{n}\\
    &\phantom{\leq \dfrac{c'}{n}\ }+ \dfrac{c}{(2n)^d}\sum_{\substack{\kappa\in\Lambda_n\cap\IZ^d: \\ \theta_\kappa(\Lambda_n) \not\supseteq\Lambda}} \int_{\{\langle\gamma_{\Lambda},1+\norm{m}^{d+\delta}\rangle\geq a_4\}} \langle\gamma_{\Lambda},1+\norm{m}^{d+\delta}\rangle \tilde{P}_n\circ\theta_\kappa^{-1}(d\gamma) \\
    &\leq \underbrace{\vphantom{\int_\{\langle\gamma_{\Lambda},1+\norm{m}^{d+\delta}\rangle \geq a_4\}}\dfrac{(1 + a_4)c'}{n}}_{\text{first term}} + \underbrace{c \int_{\{\langle\gamma_{\Lambda},1+\norm{m}^{d+\delta}\rangle \geq a_4\}} \langle\gamma_{\Lambda},1+\norm{m}^{d+\delta}\rangle \bar{P}_n(d\gamma)}_{\text{second term}}.
  \end{split}
  \end{equation*}
  Fix $\epsilon>0$; for $n\geq\tfrac{2(1+a_4)c'}{\epsilon}$, the first term is smaller than $\epsilon/2$.
  
  To control the second term, we apply Lemma \ref{lemma:EI},
  \iffalse
  recall the following result (due to H.-O. Georgii and H. Zessin, see \cite{georgii_zessin_1993}, Lemma 5.2), which proves that point configurations in $\IR^d$ with marks in a complete, separable metric space, satisfy a local equi-integrability property on entropy level sets, with respect to the marks:
	\emph{For any measurable non-negative function $f:\mathcal{S}\rightarrow\IR_+$ and for every $a>0$ and $\Delta\in\mathcal{B}_b(\IR^d)$,}
  	\begin{equation*}
      	\lim\limits_{N\rightarrow\infty}\sup\limits_{P\in\mathcal{P}(\M)_{\leq a}}\IE_{P}\Big(\1_{\{\langle\gamma_\Delta,f\rangle\geq N\}}\langle\gamma_\Delta,f\rangle\Big) = 0.
    \end{equation*}
    \fi
    to the sequence $(\bar{P}_n)_n$, and the function $f(x,m) = 1+\norm{m}^{d+\delta}$; we find $a_4>0$ such that the second term is smaller than $\epsilon/2$, uniformly in $n$, and conclude the proof of this step.
	\vspace{1em}
	\item \textbf{A cut-off kernel:} We know that $\hat{P}_n$ satisfies \ref{DLR}, i.e. for any $\Lambda$-local and bounded functional $F$
\begin{equation*}
  \int F(\gamma)\ \hat{P}_n(d\gamma) =
\int\int_{\M_\Lambda} F(\gamma)\Xi_\Lambda(\xi,d\gamma)\ \hat{P}_n(d\xi).
\end{equation*}
If $\xi \mapsto \int_{\M_\Lambda}F(\gamma)\Xi_\Lambda(\xi,d\gamma)$ were a local functional, we would be able to conclude simply by taking the limit in $n$ on both sides of the above expression, since \mbox{$\bar{P}= \lim_n \hat{P}_n$} for the topology of local convergence. But this is not the case because of the unboundedness of the range of the interaction.  We are then obliged to consider some approximation tools.
\smallbreak

To that aim, we introduce a $(\Delta,m_0)$-\emph{cut off} of the Gibbsian kernels $\Xi_\Lambda(\xi, d\gamma)$, which takes into account only the points of $\xi$ belonging to a finite volume $\Delta$ and having marks smaller than $m_0$.
\begin{Definition}
  Let $\Delta\in\mathcal{B}_b(\IR^d)$ with $\Delta\supset\Lambda$. The $(\Delta,m_0)$-cut off $ \Xi^{\Delta,m_0}_{\Lambda}$ of the Gibbsian kernel $\Xi_\Lambda$ is defined as follows:\\
	for every measurable, $\Lambda$-local and bounded functional $G\colon\M_\Lambda\rightarrow\IR$,
  \begin{equation*}
  \begin{split}
    \int_{\M_\Lambda}&G(\gamma)\Xi^{\Delta,m_0}_{\Lambda}(\xi,d\gamma)\\
    &\defeq \tfrac{1}{Z_{\Lambda}^{\Delta,m_0}(\xi_{\Delta\setminus\Lambda})}\int_{\M_\Lambda} \1_{\{\tg{m}(\gamma)\leq m_0\}}G(\gamma)e^{-H_\Lambda(\gamma_\Lambda\xi_{\Delta\setminus\Lambda})}\pi^z_\Lambda(d\gamma),
  \end{split}
  \end{equation*}
  where $Z_\Lambda^{^{\Delta,m_0}}(\xi_{\Delta\setminus\Lambda})$ is the normalisation constant.
\end{Definition}

\begin{Remarks}
  \begin{enumerate}[label=\emph{\roman*.}]
    \item $\Xi_\Lambda^{\Delta,m_0}$ is well defined since the normalisation constant $Z_\Lambda^{^{\Delta,m_0}}$ is positive and finite:
    \begin{equation*}
        0 < e^{-z\lvert\Lambda\rvert} \leq Z_{\Lambda}^{\Delta,m_0}(\xi_{\Delta\setminus\Lambda}) < +\infty.
      \end{equation*}
    \item The functional
      \begin{equation*}
        \xi\mapsto\int_{\M_\Lambda} G(\gamma)\, \Xi_\Lambda^{\Delta,m_0}(\xi,d\gamma)
    \end{equation*}
      is now local and bounded, since the supremum norm of $G$ is bounded.
  \end{enumerate}
\end{Remarks}

\vspace{1em}
We now show that $\Xi^{\Delta,m_0}_\Lambda$ is a uniform local approximation of the Gibbsian kernel $\Xi_\Lambda$, i.e.
	
\emph{For any $\epsilon>0, \tg{t}\geq 1$, for any measurable, $\Lambda$-local and bounded functional $F$, there exist $\underline{m}_0>0$ and $\underline{\Delta}\supset\Lambda$ such that, for any $m_0\geq\underline{m}_0$ and $\Delta\supset\underline{\Delta}$, we have}
  \begin{equation} \label{eq:kernestimates}
    \sup\limits_{\xi\in\M^\tg{t}}\bigg\lvert
\int_{\M_\Lambda}F(\gamma)\Xi^{\Delta,m_0}_{\Lambda}(\xi,d\gamma)
-
\int_{\M_\Lambda}F(\gamma)\Xi_\Lambda(\xi,d\gamma)\bigg\rvert\leq\epsilon.
  \end{equation}
  Indeed, let $\xi\in\M^\tg{t}$. First notice that, since $H_\Lambda(\gamma_\Lambda\xi_{\Delta\setminus\Lambda}) = H_\Lambda(\gamma_\Lambda\xi_{\Lambda^c})$ as soon as \mbox{$\Delta \supseteq \Lambda\oplus B\big(0,3\tg{d}(\Lambda)+ 2\tg{l}(\tg{t})+2\tg{m}(\gamma_\Lambda)+1\big)$} then $e^{-H_\Lambda(\gamma_\Lambda\xi_{\Delta\setminus\Lambda})} - e^{-H_\Lambda(\gamma_\Lambda\xi_{\Lambda^c})}=0$ on the set of configurations $\{\gamma: \tg{m}(\gamma_\Lambda) \leq m_0 \textrm{ and }\Lambda\oplus B\big(0,3\tg{d}(\Lambda) + 2\tg{l}(\tg{t})+2\tg{m}(\gamma_\Lambda)+1\big) \subset\Delta\}$.

  Considering the difference between both partition functions, we obtain
  \begin{equation*}
  \begin{split}
    &\left\lvert Z_{\Lambda}^{\Delta,m_0}(\xi_{\Delta\setminus\Lambda}) - Z_\Lambda(\xi_{\Lambda^c})\right\rvert = \left\lvert \int \big(\1_{\{\tg{m}(\gamma_\Lambda) \leq m_0\}} e^{-H_\Lambda(\gamma_\Lambda\xi_{\Delta\setminus\Lambda})} - e^{-H_\Lambda(\gamma_\Lambda\xi_{\Lambda^c})}\big) \pi_\Lambda^z(d\gamma) \right\rvert \\
    &\leq  \int \1_{\{\tg{m}(\gamma_\Lambda) > m_0\}\cup\{\Lambda\oplus B\big(0,3\tg{d}(\Lambda)+ 2\tg{l}(\tg{t})+2\tg{m}(\gamma_\Lambda)+1\big) \not\subseteq\Delta\}} \big( e^{-H_\Lambda(\gamma_\Lambda\xi_{\Delta\setminus\Lambda})} + e^{-H_\Lambda(\gamma_\Lambda\xi_{\Lambda^c})}\big) \pi_\Lambda^z(d\gamma)  \\
    &\overset{\eqref{H:locstab}}{\leq} \int\1_{\{\tg{m}(\gamma_\Lambda) > m_0\}\cup\{\Lambda\oplus B\big(0,3\tg{d}(\Lambda)+ 2 \tg{l}(\tg{t})+2 \tg{m}(\gamma_\Lambda)+1\big) \not\subseteq\Delta\}}2 e^{\tg{c}'\langle\gamma_{\Lambda},1+\norm{m}^{d+\delta}\rangle}\pi^z_\Lambda(d\gamma).
  \end{split}
  \end{equation*}

  Notice that this upper bound does \emph{not} depend on $\xi$ anymore. Thanks to the integrability assumption \eqref{H:mark}, by dominated convergence this implies that the map
  \begin{equation*}
    \xi\mapsto Z_\Lambda^{\Delta,m_0}(\xi_{\Delta\setminus\Lambda}) - Z_\Lambda(\xi_{\Lambda^c})
  \end{equation*}
  converges to $0$ as $m_0\uparrow\infty$ and $\Delta\uparrow\IR^d$
uniformly in $\xi\in\M^{\tg{t}}$.

  Similarly,
  \begin{equation*}
  \begin{split}
    \xi \mapsto \int_{\M_\Lambda} & \1_{\{\tg{m}(\gamma_\Lambda) \leq m_0\}} F(\gamma_\Lambda) e^{-H_\Lambda(\gamma_\Lambda\xi_{\Delta\setminus\Lambda})}\pi^z_\Lambda(d\gamma) \\
    &- \int_{\M_\Lambda} F(\gamma_\Lambda)e^{-H_\Lambda(\gamma_\Lambda\xi_{\Lambda^c})}\pi^z_\Lambda(d\gamma)
  \end{split}
  \end{equation*}
  converges to $0$ as $m_0\uparrow\infty$ and $\Delta\uparrow\IR^d$, uniformly in $\xi\in\M^{\tg{t}}$. This concludes the proof of this step: we can find $\underline{m}_0=\underline{m}_0(\epsilon,\tg{t})$ and $\underline{\Delta}=\underline{\Delta}(\epsilon,\tg{t})$ such that \eqref{eq:kernestimates} holds for any $m_0\geq \underline{m}_0$ and $\Delta\supset\underline{\Delta}$.

\item \textbf{Gibbsianity of the limit measure:} To prove the Gibbsianity of $\bar P$ we have to check that
  	\begin{equation*}
    	\delta_2 \defeq \left\lvert\int_{\M^{\temp}}\int_{\M_\Lambda} F(\gamma)\Xi_\Lambda(\xi,d\gamma)\ \bar{P}(d\xi) - \int_{\M^{\temp}} F(\gamma)\ \bar{P}(d\gamma)\right\rvert
  	\end{equation*}
  	vanishes.
  	
	We first show that for large enough $\tg{t}\geq 1$, the sets $\M^\tg{t},\underline{\M}^{\tg{l}(\tg{t})}$ are close to the support of the measures $\bar{P}$, $\bar P_n$, and $\hat P_n$: let $m_0$ and $\Delta$ be large enough in the above sense, and satisfy \mbox{$\Delta\supset\Lambda\oplus B\big(0,3\tg{d}(\Lambda)+ 2\tg{l}(\tg{t})+2m_0+1\big)$}. Thanks to the results on the supports of $\bar{P}$ (Proposition \ref{prop:supbarP}) and $\bar{P}_n$ (Proposition \ref{prop:supbarPn}), we can find $a_5>0$, independent of $n$, such that, for any $m_0$ and $\tg{t}$ larger than $a_5$, and all $n\geq1$,
	\begin{equation}\label{eq:support}
    	\bar{P}(\M^\tg{t}) \geq 1-\epsilon,\ \bar{P}_n(\underline{\M}^{\tg{l}(\tg{t})})\geq 1-\epsilon,\ \bar{P}_n\Big(\big\{\gamma\in\M:\ \tg{m}(\gamma_\Lambda)\leq m_0\big\}\Big) \overset{\eqref{eq:boundm}}{\geq} 1-\epsilon.
  	\end{equation}
 	Since, by construction, $\bar{P}_n$ dominates $\hat{P}_n$, using \eqref{eq:asymptproba} yields, for $n\geq n_0$,
   	\begin{equation}\label{eq:estimatesb}
		\hat{P}_n(\underline{\M}^{\tg{l}(\tg{t})})\geq 1-2\epsilon, \quad \hat{P}_n\Big(\big\{\gamma\in\M:\ \tg{m}(\gamma_\Lambda)\leq m_0\big\}\Big) \geq 1-2\epsilon.
	\end{equation}
	The following steps deal with the estimation of $\delta_2$: using \eqref{eq:support}, we have that (w.l.o.g. $\lVert F\rVert_\infty\leq 1$)
  	\begin{equation*}
    	\delta_2 \leq \underbrace{\lVert F\rVert_\infty \bar{P}\big((\M^{\tg{t}})^c\big) \vphantom{\int_{\M_\Lambda}}}_{\leq\epsilon} + \underbrace{\left\lvert \int_{\M^{\tg{t}}}\int_{\M_\Lambda}F(\gamma)\Xi_\Lambda(\xi,d\gamma)\bar{P}(d\xi) - \int_{\M^{\tg{t}}}F(\gamma)\bar{P}(d\gamma) \right\rvert}_{=:\delta_{21}}.
  	\end{equation*}
  	
	Using the estimates of \eqref{eq:kernestimates}, we have
	\begin{equation*}
   	\begin{split}
    	\delta_{21} & \overset{\mathclap{\eqref{eq:kernestimates}}}{\leq} \epsilon + \bigg\lvert \int_{\M^\tg{t}}\int_{\M_\Lambda}F(\gamma)\Xi_\Lambda^{\Delta,m_0}(\xi,d\gamma)\bar{P}(d\xi) - \int_{\M^\tg{t}}F(\gamma)\bar{P}(d\gamma)\bigg\rvert \\
		&\overset{\mathclap{\eqref{eq:support}}}{\leq} 2\epsilon + \underbrace{\bigg\lvert \int_{\M^{\temp}}\int_{\M_\Lambda}F(\gamma)\Xi_\Lambda^{\Delta,m_0}(\xi,d\gamma)\bar{P}(d\xi) - \int_{\M^{\temp}}F(\gamma)\bar{P}(d\gamma)\bigg\rvert}_{=:\delta_{22}}.
   	\end{split}
   	\end{equation*}
	By construction, the functional $\xi\mapsto\int_{\M_{\Lambda}}F(\gamma)\Xi_\Lambda^{\Delta,m_0}(\xi,d\gamma)$ is local; thus the local convergence of $(\hat{P_n})_n$ to $\bar P$ implies that there exists $n_1\in\IN^*$ such that, for $n\geq n_1$, both estimates hold:
	\begin{gather*}
		\left\lvert \int_{\M^{\temp}}\int_{\M_\Lambda}F(\gamma)\Xi_\Lambda^{\Delta,m_0}(\xi,d\gamma)\bar{P}(d\xi) - \int_{\M^{\temp}}\int_{\M_\Lambda}F(\gamma)\Xi_\Lambda^{\Delta,m_0}(\xi,d\gamma)\hat{P}_n(d\xi) \right\rvert \leq \epsilon,\\
		\left\lvert \int_{\M^{\temp}}F(\gamma)\bar{P}(d\gamma) - \int_{\M^{\temp}}F(\gamma)\hat{P}_n(d\gamma) \right\rvert \leq \epsilon.
	\end{gather*}
    Therefore,
    \begin{equation*}
     			\delta_{22} \leq 2\epsilon + \underbrace{\bigg\lvert \int_{\M^{\temp}}\int_{\M_\Lambda}F(\gamma)\Xi_\Lambda^{\Delta,m_0}(\xi,d\gamma)\hat{P}_n(d\xi) - \int_{\M^{\temp}}F(\gamma)\hat{P}_n(d\gamma)\bigg\rvert}_{=:\delta_{23}}.
	\end{equation*}
	Now $\delta_{23}$ can be further decomposed:
	\begin{equation*}
	\begin{split}
		\delta_{23}\overset{\eqref{eq:estimatesb}}{\leq} 2\epsilon &+ \underbrace{\bigg\lvert \int_{\underline\M^{{\tg{l}(\tg{t})}}}\int_{\M_\Lambda}F(\gamma)\Xi_\Lambda^{\Delta,m_0}(\xi,d\gamma)\hat{P}_n(d\gamma) - \int_{\underline\M^{\tg{l}(\tg{t})}}\int_{\M_\Lambda}F(\gamma)\Xi_\Lambda(\xi,d\gamma)\hat{P}_n(d\xi)\bigg\rvert}_{=:\delta_{24}} \\
		&+ \left\lvert\int_{\underline\M^{\tg{l}(\tg{t})}} \int_{\M_\Lambda}F(\gamma)\Xi_\Lambda(\xi,d\gamma)\ \hat{P}_n(d\xi)-\int_{\M^{\temp}} F(\gamma) \hat{P}_n(d\gamma)\right\rvert.
	\end{split}
	\end{equation*}
	We can estimate $\delta_4$ by conditioning
	\begin{equation*}
	\begin{split}
		\delta_{24}&= \bigg\lvert \int_{\underline\M^{{\tg{l}(\tg{t})}}}\int_{\M_\Lambda}F(\gamma)\Big(\Xi_\Lambda^{\Delta,m_0}(\xi,d\gamma) - \Xi_\Lambda(\xi,d\gamma)\Big)\hat{P}_n(d\gamma)\bigg\rvert\\
		&= \bigg\rvert \int_{\underline\M^{\tg{l}(\tg{t})}}\int_{\M_\Lambda}F(\gamma)\Big(\Xi^{\Delta,m_0}_{\Lambda}(\xi,d\gamma)\hat{P}_n(d\xi)\\
		&\phantom{\bigg\lvert\int_{\M^{\tg{t}}}} -\Xi_\Lambda\big(\xi,d\gamma\ \vert\ \{\gamma:\tg{m}(\gamma)\leq m_0\}\big)\big(1-\Xi_\Lambda\big(\xi,\{\gamma':\tg{m}(\gamma')> m_0\}\big)\big) \\
		&\phantom{\bigg\lvert\int_{\M^{\tg{t}}}\int} + \Xi_\Lambda\big(\xi,d\gamma\ \vert\ \{\gamma:\tg{m}(\gamma) > m_0\}\big)\ \Xi_\Lambda\big(\xi,\{\gamma':\tg{m}(\gamma')> m_0\}\big) \Big)\hat{P}_n(d\xi) \\
		&\leq \bigg\lvert\int_{\underline\M^{\tg{l}(\tg{t})}}\int_{\M_\Lambda}F(\gamma)\Big(\Xi^{\Delta,m_0}_{\Lambda}(\xi,d\gamma) - \Xi_\Lambda\big(\xi,d\gamma\ \vert\ \{\gamma : \tg{m}(\gamma)\leq m_0\}\big)\Big)\hat{P}_n(d\xi)\bigg\rvert\\
		&\phantom{\bigg\lvert\int_{\M}} + 2 \int_{\underline\M^{\tg{l}(\tg{t})}}\Xi_\Lambda\big(\xi,\{\gamma':\tg{m}(\gamma')> m_0\}\big)\hat{P}_n(d\xi).
	\end{split}
	\end{equation*}

	The first term in the above inequality vanishes if the two kernels coincide on $\underline\M^{\tg{l}(\tg{t})}$, which is the case thanks to $(\mathcal{H}_{r})$, since $\Lambda\oplus B\big(0,3\tg{d}(\Lambda)+ 2\tg{l}(\tg{t})+2\tg{m}(\gamma_\Lambda)+1\big) \subset\Delta$. Since $\hat{P}_n$ satisfies \ref{DLR}, for the second term we have
   	\begin{equation*}
		\int_{\underline\M^{\tg{l}(\tg{t})}}\Xi_\Lambda\big(\xi,\{\gamma':\tg{m}(\gamma')> m_0\}\big)\hat{P}_n(d\xi) \leq \hat{P}_n\big(\{\gamma':\tg{m}(\gamma'_\Lambda)> m_0\}\big)\overset{\eqref{eq:estimatesb}}{\leq}2\epsilon.
	\end{equation*}
	We then have \mbox{$\delta_{24} \leq 4\epsilon$}.
	Putting it all together,
	\begin{equation*}
	\begin{split}
		\delta_2 &\leq 11\, \epsilon + \left\lvert\int_{\underline\M^{\tg{l}(\tg{t})}} \int_{\M_\Lambda}F(\gamma)\Xi_\Lambda(\xi,d\gamma)\ \hat{P}_n(d\xi)-\int_{\M^{\temp}} F(\gamma)\hat{P}_n(d\gamma)\right\rvert \\
		&\overset{\mathclap{\eqref{eq:estimatesb}}}{\leq} 13\, \epsilon + \left\lvert\int_{\M^{\temp}} \int_{\M_\Lambda}F(\gamma)\Xi_\Lambda(\xi,d\gamma)\ \hat{P}_n(d\xi)-\int_{\M^{\temp}} F(\gamma)\hat{P}_n(d\gamma)\right\rvert = 13\, \epsilon,
	\end{split}
	\end{equation*}
	since $\hat{P}_n$ satisfies \ref{DLR}. Thanks to the arbitrariness of $\epsilon>0$, we can conclude that also $\bar{P}$ satisfies \ref{DLR}.
\end{enumerate}
\vspace{.5em}
In conclusion, $\bar{P}$ satisfies \ref{DLR} for any finite volume $\Lambda$, so Proposition \ref{prop:Gibbs} -- and consequently Theorem \ref{thm:Gibbs} -- is proved: $\bar{P}$ is an infinite-volume Gibbs measure with energy functional $H$.
\end{proof}

\section{Application to infinite-dimensional interacting diffusions}\label{section:diffusion}

We consider the case where the space of marks $\mathcal{S}$ is $C_0\big([0,1],\IR^2\big)$, the set of $\IR^2$-valued continuous paths on $[0,1]$ starting at $0$, endowed with the supremum norm  $\lVert m(\cdot)\rVert:=\max_{s\in[0,1]}\lvert m(s)\rvert $.\\
In other words, a marked point \mbox{$\msb{x}=\left(x,m(\cdot)\right)\in\IR^2\times C_0\big([0,1],\IR^2\big)$} is identified with the continuous path $(x + m(s), s \in [0,1])$ starting in $x$.\\
The random evolution of a reference path follows a gradient dynamics which solves the following Langevin stochastic differential equation:
\begin{equation}\label{eq:SDE}
  dX_s = -\dfrac{1}{2}\grad V(X_s)ds + dB_s,\ s\in [0,1],
\end{equation}
where $B$ is an $\IR^2$-valued Brownian motion.

Since we are looking for a random mark whose norm admits a reference law with a super-exponential moment (Assumption $(\mathcal{H}_m)$ in Section \ref{section:marks}), we shall restrict our attention to potentials $V$ which force the gradient dynamics to be strongly confined. Let us thus assume that the potential $V$ is smooth (i.e. of class $\mathcal{C}^2$) and that it satisfies the following bounds outside some compact set of $\IR^2$:
\begin{equation}\label{eq:VgradV}
  \exists \delta',\tg{a}_1,\tg{a}_2>0, \quad  V(x) \geq \tg{a}_1 \lvert x\rvert^{2+\delta'} \textrm{ and } \Delta V(x) -\frac{1}{2}\lvert \grad V (x)\rvert^2 \leq -\tg{a}_2\vert x\rvert^{2+2\delta'}.
\end{equation}
Indeed, the bounds in \eqref{eq:VgradV} imply the conditions of \cite{royer_2007} for existence and uniqueness of a strong solution of the Langevin equation \eqref{eq:SDE}; moreover, they also ensure the ultracontractivity of this diffusion with respect to its invariant measure \mbox{$\mu(dy) = e^{-V(y)}dy$} (see Example 3.5 in \cite{kavian_1993}).
In particular, this implies (see Theorem 4.7.1 in \cite{davies_1989}) that there exists a constant $\tg{a}>0$ such that, for any $\delta < \delta'/2$,
\begin{equation*}
  \sup_{s\in[0,1]} \IE\left[ e^{\lvert X_s\rvert^{2+2\delta}}\vert X_0 = 0 \right]
\leq \tg{a} \int_{\IR^2} e^{\lvert y\rvert^{2+2\delta}}\mu(dy)
= \tg{a} \int_{\IR^2} e^{\lvert y\rvert^{2+2\delta}}e^{-V(y)}dy <+\infty.
\end{equation*}
We denote by $\rho$ the law on $\IR_+$ of the supremum norm of the Langevin diffusion starting in 0; since the process $Y_t \defeq \sup_{s\in[0,t]}\lvert X_s\rvert^{2+\delta}$ is a submartingale, we can apply Doob's inequality to get
\begin{equation*}
   \int_{\IR_+} e^{\marknorm^{2+2\delta}} \rho(d\marknorm) = \IE\left[ e^{\sup_{s\in[0,1]}\lvert X_s\rvert^{2+2\delta}} \vert X_0 = 0 \right] < +\infty.
\end{equation*}
\begin{Remark}
  The previous reasoning can be generalised by considering the evolution of the Langevin dynamics in $\IR^d$ for any $ d>2$. Assumption \eqref{eq:VgradV} should then be reinforced by replacing the $(2+\delta')$-exponent with a $(d+\delta')$ exponent, in order to obtain the finiteness of the super-exponential moment $(\mathcal{H}_m)$.
\end{Remark}

Let us now describe the kind of interaction we consider between the marked points of a configuration. It is not necessarily superstable, and consists of a pair interaction, concerning separately the (starting) points and their attached diffusion paths, and a self interaction.

The energy of a finite configuration $\gamma = \{\msb{x}_1,\dots,\msb{x}_N\}$ is taken of the form
\begin{equation}\label{eq:diffEn}
  H(\gamma) = \sum_{i=1}^N \Psi(\msb{x}_i) + \sum_{i=1}^N\sum_{j<i}\Phi(\msb{x}_i,\msb{x}_j),
\end{equation}
where
\begin{itemize}
	\item the \emph{self-potential} term $\Psi$ satisfies $\inf_{x\in\IR^2}\Psi(x+m)\geq-\tg{c}_1(1+\norm{m}^{2+\delta})$, for some constant $\tg{c}_1>0$;
	\item the \emph{two-body potential} $\Phi$ is defined by \footnote{An earlier version of this work used a potential $\Phi$ which did not satisfy assumption $(\mathcal{H}_{loc.st})$.}
	\begin{equation}\label{eq:diff:2body}
			\Phi(\msb{x}_i,\msb{x}_j) = \Big(\int_0^{1} \phi(\abs{x_i -x_j + m_i(s) - m_j(s)})ds\Big) \1_{[0, a_0 + \norm{m_i} + \norm{m_j}]}(\abs{x_i-x_j}),
	\end{equation}
	with $\phi$ given by the sum of two potentials on $\IR_+$: $\phi = \phi_{hc} + \phi_{l}$, where
	\begin{itemize}
			\item The potential $\phi_{hc}$ is pure \emph{hard core} at some diameter $R>0$, that is
			\begin{equation*}
				\phi_{hc}(u)=(+\infty)\1_{[0,R)}(u).	
			\end{equation*}
			\item The potential $\phi_l$ satisfies a stability property, i.e. there exists a constant $\tg{c}_\phi\geq 0$ such that, for any admissible configuration $\{y_1,\dots,y_N\}$, $N\geq 1$, the following holds (see \cite{ruelle_1969}, paragraph 3.2.5):
		\begin{equation}\label{eq:diff:hcstab}
			\sum_{i=1}^N \phi_l(\abs{y_i})\geq -2\tg{c}_\phi,
		\end{equation}
		where a finite configuration $\{y_1,\dots,y_N\}\subset\IR^d$, $N\geq 1$, is called \emph{admissible} if, for any pair $y_i\neq y_j$, $\phi(\abs{y_i-y_j})<+\infty$.
		
		Moreover, we assume that $\phi_l(\lvert x\rvert)\leq 0$ for any $\lvert x\rvert\geq a_0$ (see, e.g., Figure \ref{fig:potential}).
		\end{itemize}
		Note that \eqref{eq:diff:hcstab} implies that the potential $\phi$ -- defined on the location space $\IR^d$ -- is \emph{stable} in the sense of Ruelle (see \cite{ruelle_1970} and \cite{minlos_1967}), with stability constant $\tg{c}_\phi$, i.e.
		\begin{equation*}
			\forall N\geq 1,\, \forall\{y_1,\dots,y_N\}\subset\IR^d,\quad \sum_{1\leq i<j\leq N} \phi(\abs{y_i-y_j})\geq - \tg{c}_\phi N.
		\end{equation*}

%	\begin{equation*}
%		\Phi(\msb{x}_1,\msb{x}_2) = \Big(\phi(\lvert x_1-x_2\rvert) + \int_0^1 \tilde{\phi}(\lvert m_1(s)-m_2(s)\rvert)ds\Big) \1_{\{\lvert x_1-x_2\rvert\leq a_0 + \norm{m_1} + \norm{m_2}\}},
%	\end{equation*}
%	where $\phi:\IR_+\rightarrow\IR\cup\{+\infty\}$ is a \emph{stable} pair potential, i.e. there exists a constant $\tg{c}_\phi>0$ such that, for any $\{x_1,\dots,x_N\}\subset\IR^2$, $N\geq 1$,
%	\begin{equation*}
%		\sum_{i=1}^N\sum_{j<i}\phi(\lvert x_i-x_j\rvert)\geq -\tg{c}_\phi N,
%	\end{equation*}
%see \cite{ruelle_1970} and \cite{minlos_1967}; moreover, we assume that $\phi(\lvert x\rvert)\leq 0$ for any $\lvert x\rvert\geq a_0$ (see, e.g., Figure \ref{fig:potential}). Moreover, we assume that $\phi$ is bounded from below by some constant $-\tg{c}_2$. The pair potential $\tilde\phi:\IR^2\rightarrow\IR_+\cup\{+\infty\}$ is a non-negative function.
\end{itemize}

\begin{figure}[t]
  \begin{minipage}[t]{0.45\textwidth}
  	\centering
    \includegraphics[height=\linewidth]{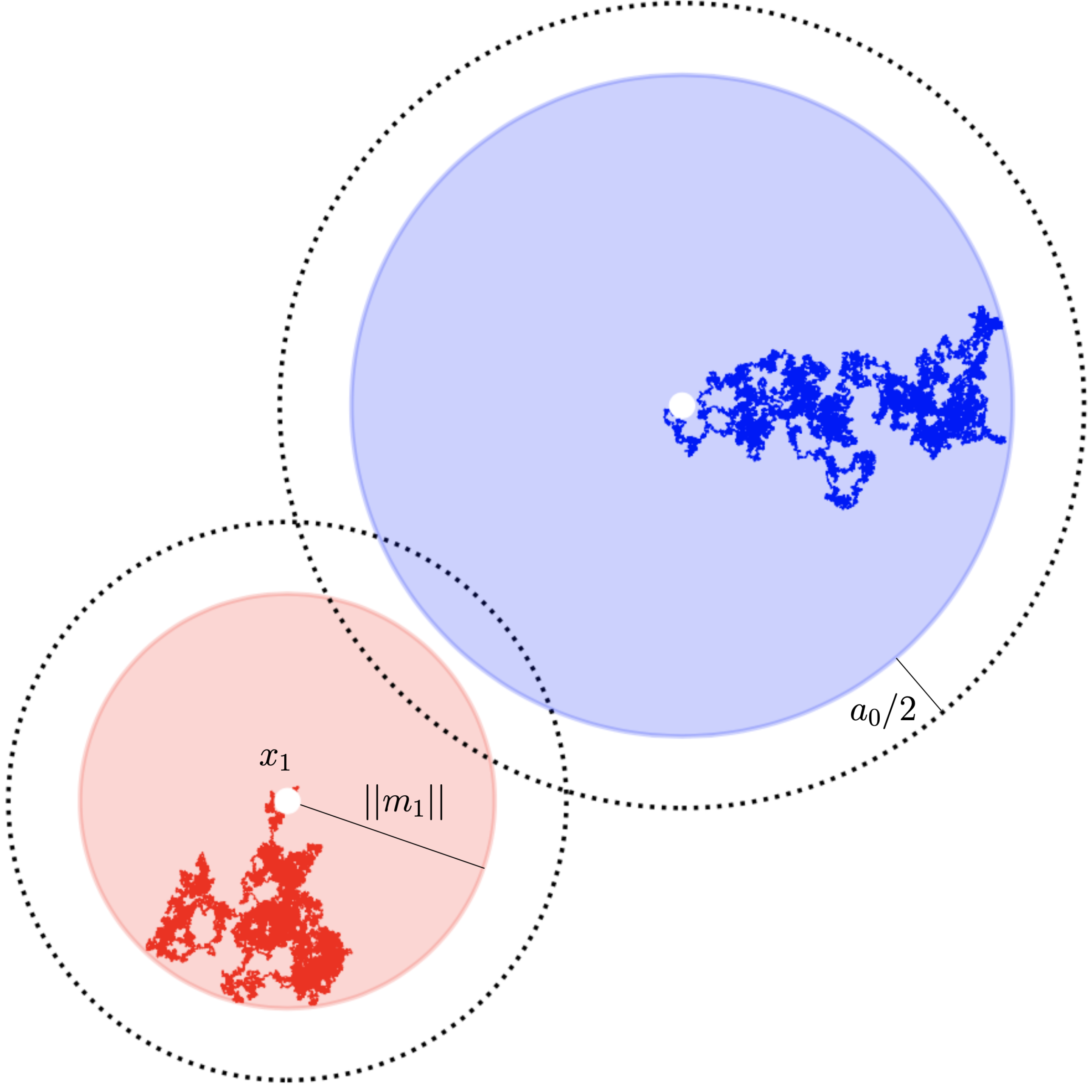}
    \caption{Two paths of a Langevin diffusion in $\IR^2$. Each circle is centred in the starting point; the radius of the coloured circles correspond to their maximum displacement in the time interval $[0,1]$; the dotted circles represent the security distance $a_0/2$.}
    \label{fig:diffs}
  \end{minipage}\hfill
  \begin{minipage}[t]{0.45\textwidth}
  	\centering
  	\includegraphics[height=.6\linewidth]{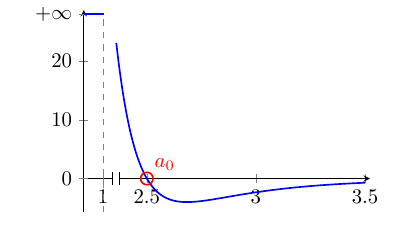}
	\caption{A shifted Lennard--Jones potential $\phi_{LJ}(u-1) = 16\big(\big(\frac{3/2}{u-1}\big)^{12} - \big(\frac{3/2}{u-1}\big)^{6}\big)$ with hard-core diameter $R = 1$; it is always negative after $a_0=2.5$, and explodes as $x\to 1^+$.}
	\label{fig:potential}
  \end{minipage}
\end{figure}

Under these assumptions, the potential $\Phi$ is \emph{stable} $\tg{c}_\Phi = \tg{c}_\phi$ in the following sense:
\begin{equation*}
	\sum_{\{\msb{x},\msb{y}\}\subset\gamma}\Phi(\msb{x},\msb{y}) \geq -\tg{c}_\Phi\lvert\gamma\rvert, \quad \forall \gamma\in\mathcal{M}_f.
\end{equation*}
It is then straightforward to prove that such an energy functional $H$ satisfies the stability assumption $(\mathcal{H}_{st})$ with constant $\tg{c} = \tg{c}_1\vee\tg{c}_\Phi$, and the range assumption $(\mathcal{H}_{r})$. Moreover, the local uniform-stability assumption $(\mathcal{H}_{loc.st})$ also holds:

Let $\gamma\in\mathcal{M}$ and $\xi\in\mathcal{M}^{\tg{t}},\ \tg{t}\geq 1$, and denote $\Delta = \Lambda\oplus B(0,\tg{r})$. We have
\begin{equation*}
  H_{\Lambda}(\gamma_\Lambda\xi_{\Delta\setminus\Lambda}) =  \underbrace{\sum_{\msb{x}\in\gamma_\Lambda} \Psi(\msb{x}) +  \sum_{\{\msb{x},\msb{y}\}\subset\gamma_\Lambda}\Phi(\msb{x},\msb{y})}_{\overset{(\mathcal{H}_{st})}{\geq} -(\tg{c}_1\vee\tg{c}_\Phi)\sum_{\msb{x}\in\gamma_\Lambda} ( 1+\lVert m\rVert^{2+\delta})}  + \sum_{\substack{\msb{x}\in\gamma_\Lambda\\ \msb{y}\in\xi_{\Delta\setminus\Lambda}}}\Phi(\msb{x},\msb{y}).
\end{equation*}
%Since $\xi\in\mathcal{M}^{\tg{t}}$, there exists $\tg{c}_4(\tg{t})$ such that \mbox{$\lvert\xi_\Delta\rvert \leq \tg{c}_4(\tg{t})$}. Therefore,
%\begin{equation*}
%\begin{split}
%  \sum_{\substack{\msb{x}\in\gamma_\Lambda\\ \msb{y}\in\xi_{\Delta\setminus\Lambda}}}\Phi(\msb{x},\msb{y}) &\geq \sum_{\substack{\msb{x}\in\gamma_\Lambda\\ \msb{y}\in\xi_{\Delta\setminus\Lambda}}}\phi(\lvert x-y\rvert) + \sum_{\substack{\msb{x}\in\gamma_\Lambda\\ \msb{y}\in\xi_{\Delta\setminus\Lambda}}} \int_0^1 \tilde{\phi}\big(\lvert m_x(s)-m_y(s)\rvert\big) ds\\
%  & \geq \sum_{\substack{\msb{x}\in\gamma_\Lambda\\ \msb{y}\in\xi_{\Delta\setminus\Lambda}}}\phi(\lvert x-y\rvert) \geq - \tg{c}_2\ \lvert\gamma_\Lambda\rvert\lvert\xi_{\Delta\setminus\Lambda}\rvert\\
%  &\overset{\mathclap{(\xi\in\mathcal{M}^{\tg{t}})}}{\geq} \quad\!  - \bar{\tg{c}}(\tg{t}) \lvert \gamma_\Lambda\rvert,
%\end{split}
%\end{equation*}
%where $\bar{\tg{c}}(\tg{t})\defeq \tg{c}_2\tg{c}_4(\tg{t})$, so that $(\mathcal{H}_{loc.st})$ holds with $\tg{c}'(\tg{t})\defeq \tg{c}_1 \vee \bar{\tg{c}}(\tg{t})$.
We can assume that $\xi_\Delta$ is of finite energy, and therefore use \eqref{eq:diff:hcstab} to estimate
	\begin{equation*}
	\begin{split}
		\sum_{\msb{y}\in\xi_{\Delta\setminus\Lambda}} \Phi(\msb{x},\msb{y}) &= \int_0^{1} \sum_{(y,n)\in\xi_{\Delta\setminus\Lambda}} \phi(\abs{x-y+m(s)-n(s)})ds\, \1_{\{\lvert x - y\rvert\leq a_0 + \norm{m} + \norm{n}\}}\\
		&\overset{\mathclap{\eqref{eq:diff:hcstab}}}{\geq}\  \int_0^{1}  -2\tg{c}_\phi \geq -2\tg{c}_\phi. 
	\end{split}
	\end{equation*}
	Together with the stability of $\gamma_\Lambda\mapsto H(\gamma_\Lambda)$, this yields the following lower bound for the conditional energy:
	\begin{equation*}
		H_\Lambda(\gamma_\Lambda\xi_{\Lambda^c})\geq -(\tg{c}_1\vee 2\tg{c}_\Phi)\sum_{\msb{x}\in\gamma_\Lambda}(1+\norm{m}^{d+\delta}).\qedhere
	\end{equation*}
\begin{Remark}
  The above is an example of a pair potential with finite but not uniformly bounded range.
\end{Remark}
\begin{Example}
  A concrete example of functions satisfying \eqref{eq:VgradV} -- \eqref{eq:diffEn} is as follows:
  \begin{itemize}
    \item 	For the Langevin dynamics, consider $V(x)=\lvert x\rvert^4$; then the diffusion is ultracontractive with $\delta'=2$. The invariant measure $\mu(dx)=e^{\lvert x\rvert^4}dx$ is a Subbotin measure (see \cite{subbotin_1923}).
    \item For the interaction, let \mbox{$\psi(z)=-1-z^{\sfrac{5}{2}}$}, and $\phi$ be given by the sum of a hard-core component and a shifted Lennard--Jones potential, i.e.
		\begin{equation*}
			\phi(u) = \phi_{hc}(u) + \phi_{LJ}(u-R)\1_{[R,+\infty)}(u),\ u\in\IR_+,
		\end{equation*}
		where $\phi_{LJ}(u) = \frac{a}{u^{12}} - \frac{b}{u^6}$, $a,b>0$.
  \end{itemize}
\end{Example}

\begin{acknowledgments}
The authors would like to warmly thank David Dereudre for the fruitful discussions they had together on the topic. We also thank the two referees for the accurate reading and the many comments and remarks, which notably improved the earlier versions of this paper.
\end{acknowledgments}

%\newpage
\bibliographystyle{abbrv}

\end{document}